\documentclass[twoside]{article}
\usepackage{BSPMsample}
\usepackage{lmodern}
\usepackage[numbers]{natbib}
\usepackage[usenames,dvipsnames]{color}
\usepackage{float}
\usepackage{authblk}
\usepackage{subfigure}
\usepackage{amsmath,amssymb,amsthm,amsfonts}
\usepackage{latexsym}
\usepackage{graphicx}
\usepackage{graphics}
\usepackage{hyperref}
\usepackage[utf8]{inputenc}

\DeclareUnicodeCharacter{2217}{\textendash}

\title[{Analysis and Control of Hepatitis B Virus Spread}]{\textbf{Analysis and Control of Hepatitis B Virus Spread with Harmonic Mean Incidence and Vertical Transmission: A Compartmental Modeling Approach}} 

\author{Mustaq Ahmad, Archana Singh Bhadauria and Rachana Pathak}

\address{Mustaq Ahmad, \\Department of Mathematics and Statistics, \\Deen Dayal Upadhyaya Gorakhpur University, Gorakhpur, \\India.}
\email{mustaq.student@ddugu.ac.in}

\address{Archana Singh Bhadauria, \\Department of Mathematics and Statistics, \\Deen Dayal Upadhyaya Gorakhpur University, Gorakhpur, \\India.}
\email{archana.mathstat@ddugu.ac.in}

\address{Rachana Pathak, \\Department of Applied Science and Humanities(Mathematics), \\Faculty of Engineering \& Technology, University of Lucknow, \\India.}
\email{	rachanapathak2@gmail.com}
\date{}

\begin{document}
\maketitle

\begin{abstract}
  Hepatitis B (HBV) is a potentially fatal liver infection that remains a primary widespread disease around the globe, despite the availability of vaccinations, owing to ongoing obstacles such as vertical transmission and delayed treatment. In the present study, a compartmental mathematical model that accounts for the effects of vertical transmission and harmonic mean type incidence is put forward for the analysis of the transmission dynamics of HBV. The model’s threshold behavior is investigated through the basic reproduction number \(\mathcal{R}_0\), and the qualitative dynamics around the disease-free and endemic equilibria are examined to establish conditions for their asymptotic local and global stability. To identify key epidemiological drivers, the global sensitivity analysis of the system is performed through the incorporation of Latin Hypercube Sampling (LHS) and Partial Rank Correlation Coefficient (PRCC) methodologies. We formulate the optimal control problem using Pontryagin's maximum principle. Effective strategies for diminishing the prevalence of HBV are emphasized through numerical simulations. Numerical simulations demonstrated that optimized control measures play a significant role in reducing the number of infected individuals and increasing the recovered population. In addition, we have characterized three distinct strategies for reducing infection and observed that each has limitations in eliminating infections. However, the simultaneous administration of treatment and vaccination is useful and effective for the reduction of HBV infection.
\end{abstract}

\textbf{Keywords:} Epidemic model, Harmonic Mean Incidence, Vertical Transmission Incidence, PRCC/LHS, Control and Optimization

\tableofcontents

\2010mathclass{34D20, 92D30, 49J15.}

\section{Introduction}Hepatitis B virus (HBV) is a substantial global health burden, impacting millions and resulting in thousands of cases of life-threatening liver disease annually. HBV is classified as a hepatotropic DNA virus that can cause a variety of liver-related complications, from acute hepatitis B to chronic infections that may progress to cirrhosis or hepatocellular carcinoma (HCC). The spectrum of HBV infection includes not only overt cases but also occult hepatitis B infection (OBI), where individuals may present low HBV DNA levels despite being hepatitis B surface antigen (HBsAg) negative. Such cases have the potential to reactivate, leading to acute hepatitis B. HBV transmission primarily occurs through percutaneous or mucosal exposure to infectious blood or body fluids. This transmission occurs via routes including sexual contact, injection drug use, and perinatal exposure, with the latter being the most significant contributor to chronic infections. The prevalence of HBV infection varies widely by geographical region, and chronic infection is notably more likely to develop in individuals infected during infancy, with 80\%-90\% of such cases leading to chronicity. Chronic HBV infection poses an increased risk for serious liver complications, including cirrhosis and liver cancer, necessitating ongoing medical care. Approximately 25\% of individuals who become chronically infected during childhood and 15\% of those infected later will die prematurely from liver-related diseases. Moreover, chronic infection can be influenced by host factors, including genetic variations such as single-nucleotide polymorphisms (SNPs) that have been linked to various clinical outcomes related to HBV, including spontaneous clearance and reactivation. Infectious disease models had been at the core of scientific studies in the last century, and researchers have used them for various diseases, such as tuberculosis, human immunodeficiency virus (HIV), hepatitis B, influenza, and the recent one COVID-19 \cite{castillo2002computation,diekmann2000mathematical,ghosh2019qualitative,hethcote2000mathematics,van2002reproduction}. Hepatitis B virus (HBV) infection is a disease of global health. According to the data of the World Health Organization (WHO), more than 2 billion people have been infected with HBV at some time in their lives. Of these, about 350 million remain infected chronically and become carriers of the virus. Every year, there are over 4 million acute clinical cases of HBV, and about 25 percent of carriers. HBV-related conditions, including chronic hepatitis, cirrhosis, and hepatocellular carcinoma, are responsible for more than a million deaths annually (WHO, 204) \cite{who2024global}. In 2022, only 13 percent of those living with chronic hepatitis B had been clinically diagnosed, and 3\% had received proper treatment. Achieving the elimination targets, which aim for 90\% diagnostic coverage and 80\% treatment coverage, will require major improvements in testing strategies, simplified clinical pathways, and expanded access to care. Infectious diseases of this nature have long posed significant challenges to global public health.

\vspace{0.5cm}
Numerous epidemic mathematical models have been developed by mathematicians and biologists to study the transmission dynamics of infectious diseases within populations \cite{khan2016classification, khan2017transmission,liu2022numerical,martin1974logarithmic,zaman2008stability,zarin2021fractional,zarin2021dynamics}. Several modeling approaches have been proposed specifically for Hepatitis B dynamics \cite{thornley2008hepatitis,zou2010modeling}. Fundamental component in such mathematical models is the incidence rate, as it tells us how susceptible and infectious populations interact. For infections that spread through close physical contact, including HIV and HBV etc., nonlinear incidence functions often provide a more realistic framework than traditional bilinear forms \cite{khan2021modeling,li2002qualitative, umdekar2023seir}. The interactions between HBV and host immune responses are critical, as they are not only responsible for liver injury but also play a role in the progression to cirrhosis and hepatocellular carcinoma (HCC) when the virus persists. The study of HBV transmission dynamics with a harmonic-mean-type incidence function, incorporating vertical transmission, provides a nuanced framework for modeling the spread and control of disease dynamics. This approach is often used to account for scenarios in which the transmission rate does not increase linearly with population size, potentially reflecting limitations in the transmission rate due to factors such as saturation effects in high-prevalence populations. The harmonic mean uses the reciprocals of incidence rates, giving greater weight to lower rates compared to larger ones. This approach ensures that lower incidence rates, which may better reflect the transmission dynamics of a disease during specific periods, have a more significant impact on the overall harmonic mean. Integrating the harmonic mean incidence rate into epidemic models provides a more precise depiction of temporal variations in disease transmission. This refinement enhances model realism and improves its effectiveness in analyzing and forecasting epidemic dynamics.    

\vspace{0.5cm}
Treatment and vaccination help to reduce the infection in the population and eventually to curb the epidemic in the population. But in reality, medical aids such as drugs, vaccines, therapies, hospital beds, counseling, etc., are always scarce. Now, to develop a better-performing mathematical model that can replicate the real-world phenomena and predict future trends of the epidemic in the population, many authors have applied treatment functions to their mathematical models. In order to suffice the limited medical resources, Wang \& Ruan in \cite{wang2004bifurcations} introduced the linear treatment function T(I), where \(T(I) = \begin{cases}   
r, & \text{if } I > 0 \\   
0, & \text{if } I = 0 \\      
\end{cases} \) and later, Wang in \cite{wang2006backward} redefined this idea by introducing a piecewise linear treatment function, \(T(I) = \begin{cases}   
k I, & \text{if } 0 \leq I \leq I_0\\   
k I_0, & \text{if } I \geq I_0 \\      
\end{cases} \). Zhang and Liu \cite{zhang2008backward} introduced a continuously differentiable treatment function \(T(I) = \frac{r I}{1 + \alpha I} \) to describe the saturation phenomenon of the limited medical resources. Here, T(I) is an increasing function of I, and \(\frac{r}{\alpha}\) denotes the maximal supply of medical supplies per unit time. Similarly, the Holling type II treatment function is a nonlinear response function often used to model the effects of treatment or therapy in epidemiological studies \cite{ghosh2019qualitative,GUMEL2003409}. This function is characterized by saturation, reflecting the reality that treatment efficacy cannot increase indefinitely with disease progression. Its application in modeling Hepatitis B dynamics has garnered attention due to its biological plausibility and ability to capture complex dynamics. Jayanta Kumar Ghosh et al. \cite{ghosh2019qualitative} have used the modified treatment function \(T(I,u) = \frac{r u I}{1 + b u I} \). This function increases with $I$ and $u$, and the maximum availability of medical resources is given by \(\frac{r}{b}\), where b represents the delayed treatment parameter, with r denoting the cure rate, and T decreases as b increases. It is essential to note that our study differs from some of the other relevant papers referenced in this publication because we have incorporated harmonic mean incidence along with a modified Holling type II treatment function in our epidemiological model and employed various approaches provided in the works mentioned \cite{ghosh2019qualitative,khan2020stability,khan2017transmission,zarin2021dynamics} to assess the stability or instability of the equilibrium points. In addition to that, we have used the LHS/PRCC technique to identify the parameters which are influencing most the transmission dynamics of disease in the model. It should be emphasized that this study will address both the most effective strategy to control the diseases and the qualitative analysis of the model. In addition, numerical simulations are conducted in different scenarios, and different strategies are developed to assess the beneficial effects of control measures.       

\section{Mathematical Model for the Dynamics of HBV Transmission}

In this work, we develop an SACR-type compartmental framework that integrates vaccination, a modified Holling type II treatment mechanism, and a harmonic-mean incidence structure, incorporating vertical transmission. This formulation captures both direct transmission through contact and the contribution of chronically infected mothers to new infections during childbirth. In addition, the model accounts for the influence of latent (acute) infections on overall infectiousness.

The total population at time $t$ is denoted by \[N(t)=S(t)+A(t)+C(t)+R(t),\]
and is divided into the following epidemiological groups:

\begin{itemize}
    \item \textbf{Susceptible ($S$):} individuals who can get the infection.
    \item \textbf{Acute or latent ($A$):} newly infected people who are not fully infectious yet.
    \item \textbf{Chronic ($C$):} long-term infected individuals who can spread the virus.
    \item \textbf{Recovered ($R$):} individuals who have immunity due to vaccination or recovery.
\end{itemize}

We make the following assumptions to build the model:

\begin{enumerate}
    \item Birth and natural death rates are constant. In the absence of infection, they balance each other.
    \item People mix uniformly in the population, so every individual has an equal chance of contacting others.
    \item Infection occurs at rate $\beta$. A susceptible person may get infected after contact with someone in the $A$ or $C$ class.
    \item Susceptible individuals receive vaccination at a rate $\delta$, and vaccinated persons move into the recovered class.
    \item Acute infections are treated using a Holling type~II function. Here, $u$ represents the treatment effort, $r$ is the maximum cure rate, and $b$ controls the saturation in treatment. Successfully treated acute individuals enter the recovered class.
    \item Acute individuals progress to the chronic class at a rate $\sigma$, so the average duration in the acute class is $1/\sigma$.
    \item Mothers in the chronic class may pass the infection to their newborns. This vertical transmission adds newborns directly into the acute class.
\end{enumerate}

These assumptions help us capture the main features of HBV transmission. 
The harmonic-mean incidence reflects changes in transmission pressure when the number of infectious individuals varies. 
The treatment function accounts for real-life limits in medical resources. 
Together, these components make the model more realistic and useful for studying the spread of HBV and improving control strategies.

\vspace{0.5cm}
The SACR model describes the transmission of the disease through four coupled nonlinear ordinary differential equations, which are given below:

\begin{align}
\frac{dS}{dt} &= \lambda(1-\alpha C) - \beta \frac{2AS}{A+S} - \beta \gamma \frac{2CS}{C+S} - (\delta + \mu)S \\
\frac{dA}{dt} &= \beta \frac{2AS}{A+S} + \beta \gamma \frac{2CS}{C+S} - \frac{ruA}{1+buA} - (\sigma + \mu)A \\
\frac{dC}{dt} &= \sigma A + (\alpha \lambda - \theta - \tau - \mu)C \\
\frac{dR}{dt} &= \delta S + \frac{ruA}{1+buA} + \theta C - \mu R
\end{align}

The associated initial conditions are described as:
\[S(0) \geq 0, A(0) \geq 0, C(0) \geq 0 \text{ and } R(0) \geq 0.\]

where \( \lambda \) is the recruitment rate, \( \mu \) is the rate of natural death, \( \beta \) is the rate of transmission of infection, \( \sigma \) is the progression rate from acute to chronic, and \( \theta \) is the treatment rate. The flow chart of the above model is

\hspace{-0.5cm}
\begin{figure}[ht]
    \centering
    \includegraphics[scale=0.4]{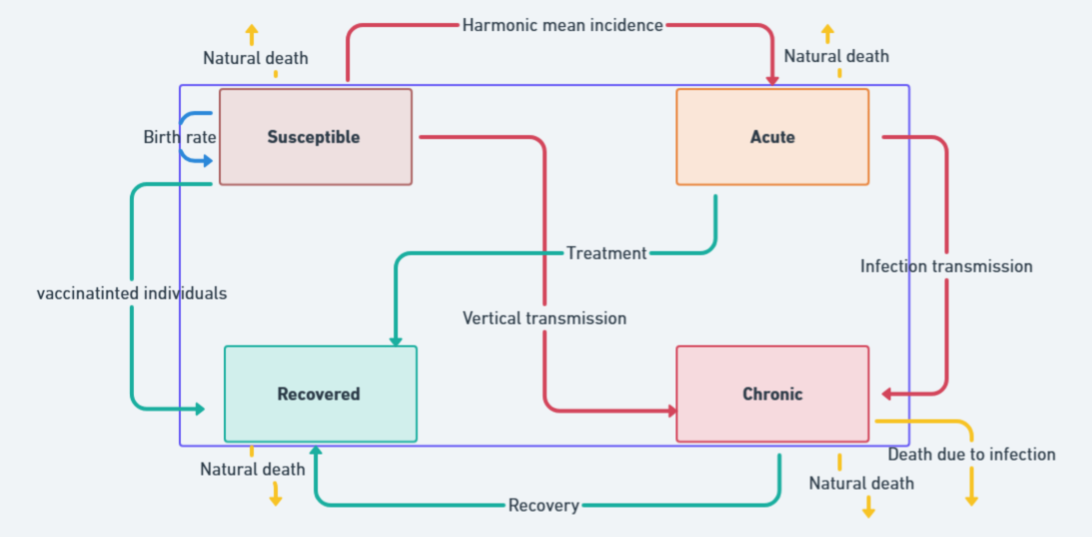}

    \caption{Schematic diagram showing the transmission process of HBV in the SACR model}
    \label{Fig_1}
\end{figure}

\begin{table}[h]
  \caption{Parameter values and their biological meanings used in the HBV model}
  \label{Table}
  \begin{center}
   \begin{tabular}{|l||l|l|}\hline 

        Parameter & Description & Value \\ \hline\hline
        $\lambda$ & Recruitment rate of susceptible population & 6.56 person $day^{-1}$ \\ \hline
        $\alpha$ & Percentage of prenatal period infected population & 0.003 $day^{-1}$\\ \hline 
        $\beta$ & Transmission rate of susceptible to infected class & 0.0095 $day^{-1}$\\ \hline
        $\gamma$ & Reduction factor in the infection transmission rate of chronic cases & 0.42 $day^{-1}$ \\ \hline
        $\delta$ & Rate of vaccination of susceptible class & 0.02 $day^{-1}$\\ \hline
        $\mu$ & Natural death rate & 0.0221 $day^{-1}$ \\ \hline
        $\sigma$ & Rate of transmission from acute to chronic class & 0.33 $day^{-1}$\\ \hline
        $\theta$ & Rate of treatment of chronic class & 0.012 $day^{-1}$ \\ \hline
        $\tau$ & Death due to chronic infection & 0.0026 $day^{-1}$\\ \hline
        $r$ & Cure rate of acutely infected population due to treatment & 1.5 person $day^{-1}$  \\ \hline
        $u$ & Rate of treatment of acute class & 0.39 $person^{-1}$ \\ \hline
        $b$ &  Saturation constant as delayed parameter of treatment & 0.5 \\ \hline
    \end{tabular}  
  \end{center}       
\end{table}

\subsection{Positivity and Boundedness of Model}

The total population N(t) of the system given above is:
\begin{align*}
N(t) &= S(t) + A(t) + C(t) + R(t), \\
\Rightarrow \frac{dN}{dt} &= \frac{dS}{dt} + \frac{dA}{dt} + \frac{dC}{dt} + \frac{dR}{dt}, \\
\Rightarrow \frac{dN}{dt} &= \lambda - \mu N - \tau C, \\
\Rightarrow \frac{dN}{dt} &\leq \lambda - \mu N.
\end{align*}

Thus, we have:
\[
N \leq \frac{\lambda}{\mu}.
\]

Assume the biologically feasible region $\Omega$ as under
\[
\Omega = \{ S, A, C, R \}  : S + A + C + R = N \leq \frac{\lambda}{\mu}, \text{where}, \, S \geq 0, \, A \geq 0, \, C \geq 0, \, R \geq 0
\}.
\]

\vspace{0.5 cm}
The above expression implies that the domain $\Omega$ remains positively invariant. This property confirms the well-defined nature of the model and its relevance from both biological and epidemiological perspectives \cite{khan2018spreading}.

\section{Disease Free Equilibrium Point (DFE)}

By equating all equations of the system (1) to zero and substituting \( A = C = 0 \), we obtain DFE \( E^0 \) as:
\[
E^0 = (S^0, A^0, C^0, R^0)
\]
\[
E^0 = \left( \frac{\lambda}{\delta + \mu}, 0, 0, \frac{\delta \lambda}{\mu(\delta + \mu)} \right).
\]

\section{Calculation of the Reproduction Number \texorpdfstring{\( R_0 \)}{R0}}
The reproduction number \cite{diekmann2000mathematical,diekmann1990definition,van2002reproduction}, denoted by $\mathcal{R}_0$, represents the level of virus transmission or spread. The steps to determine $\mathcal{R}_0$ provided by P. van den Driessche and James Watmough \cite{van2002reproduction} are as follows:

The following pair of differential equations gives the rate of change of the infected subpopulation over time:
\begin{align*}
    \frac{dA}{dt} &= \beta \frac{2AS}{A+S} + \beta \gamma \frac{2CS}{C+S} - \frac{ruA}{1+buA} - (\sigma + u)A \\
    \frac{dC}{dt} &= \sigma A + (\alpha \lambda - \theta - \tau - \mu)C \\ 
\end{align*}

The above equations can be expressed in vector form as follows:
\begin{align*}
    \frac{d}{dt}
    \begin{pmatrix}
        x
    \end{pmatrix}
    &= \mathcal{F} - \mathcal{V},
\end{align*}
where 
\[
    \mathbf{x} = \begin{pmatrix} A \\ C \end{pmatrix}, \quad
    \mathcal{F} = \begin{pmatrix}\beta \frac{2AS}{A+S} + \beta \gamma \frac{2CS}{C+S} \\ 0 \end{pmatrix}, \quad
    \mathcal{V} = \begin{pmatrix}\frac{ruA}{1+buA} + (\sigma + \mu)A \\ (\theta + \tau + \mu - \alpha \lambda)C - \sigma A \end{pmatrix}.
\]

The Jacobian matrices obtained from the linearization of the above equations are:
\[
    D\mathcal{F} = \begin{pmatrix} 2 \beta \left(\frac{S}{A+S} \right)^2  & 2 \beta \gamma \left(\frac{S}{C+S} \right)^2  \\ 0 & 0 \end{pmatrix}, \quad
    D\mathcal{V} = \begin{pmatrix} \frac{r u }{(1 + b u A)^2} 
    + \sigma + \mu & 0 \\ -\sigma & \theta + \tau + \mu - \alpha \lambda \end{pmatrix}.
\]

Evaluating these Jacobian matrices at the disease-free equilibrium (DFE) point $E^0$, we get:
\[
    F = D\mathcal{F} \big|_{E^0} = \begin{pmatrix} 2 \beta & 2 \beta \gamma \\ 0 & 0 \end{pmatrix}, \quad
    V = D\mathcal{V} \big|_{E^0} = \begin{pmatrix} r u + \sigma + \mu  & 0 \\ -\sigma & \theta + \tau + \mu - \alpha \lambda\end{pmatrix}.
\]

The next generation matrix $\mathcal{G}$ is given by:
\[
    \mathcal{G} = F V^{-1} = \begin{pmatrix} \frac{2 \beta (\theta + \tau + \mu - \alpha \lambda + \gamma \sigma)}{(r u + \sigma + \mu)(\theta + \tau + \mu - \alpha \lambda)} & 0 \\ 0 & 0 \end{pmatrix}.
\]

The eigenvalues of the matrix $\mathcal{G}$ are found  by solving the characteristic equation:
\[
    |\mathcal{G} - x I| = 0.
\]

The eigenvalues are $x_1 = 0$ and $x_2 = \frac{2 \beta (\theta + \tau + \mu - \alpha \lambda + \gamma \sigma)}{(r u + \sigma + \mu)(\theta + \tau + \mu - \alpha \lambda)}$.
\vspace{0.5in}

The reproduction number, $\mathcal{R}_0$, is obtained as the leading eigenvalue of the matrix $\mathcal{G}$:

\[
    \mathcal{R}_0 = \frac{2 \beta (\theta + \tau + \mu - \alpha \lambda + \gamma \sigma)}{(r u + \sigma + \mu)(\theta + \tau + \mu - \alpha \lambda)}.
\]

Thus, the reproduction number is:
\begin{align*}
\mathcal{R}_0 &= \frac{2 \beta}{r u + \sigma + \mu} + \frac{2 \beta \sigma \gamma}{(r u + \sigma + \mu)(\theta + \tau + \mu - \alpha \lambda)} \\
\mathcal{R}_0 &= {\mathcal{R}_0}_A + {\mathcal{R}_0}_C
\end{align*}
where, \({\mathcal{R}_0}_A = \frac{2 \beta}{r u + \sigma + \mu} \) and \({\mathcal{R}_0}_C = \frac{2 \beta \sigma \gamma}{(r u + \sigma + \mu)(\theta + \tau + \mu - \alpha \lambda)}\).

\section{Existence of Endemic Equilibrium: Isocline Method}In this section, we put to use the isocline method \cite{yadav2023study} to prove the existence of an endemic equilibrium($E^*$), where $E^*$ is the solution to a system of equations
\begin{align}
\lambda(1-\alpha C) - \beta \frac{2AS}{A+S} - \beta \gamma \frac{2CS}{C+S} - (\delta + \mu)S = 0 \\
\beta \frac{2AS}{A+S} + \beta \gamma \frac{2CS}{C+S} - \frac{ruA}{1+buA} - (\sigma + \mu)A = 0 \\
\sigma A + (\alpha \lambda - \theta - \tau - \mu)C = 0 \\
\delta S + \frac{ruA}{1+buA} + \theta C - \mu R = 0
\end{align}

From the above equation(7), we have 
\[
A = \frac{1}{\sigma} (\theta + \tau + \mu - \alpha \lambda) C = \frac{\gamma_2 C}{\sigma}
\]
where \(\gamma_2 = \theta + \tau + \mu - \alpha \lambda \).

From equation (5), we obtain

\[
\chi(S,C) = \lambda (1 - \alpha C) - 2 \beta \left( \frac{\gamma_2 C S}{\gamma_2 C + \sigma S} + \frac{\gamma S C}{S + C} \right) - (\delta + \mu) S = 0 \tag{9}
\]

From equation (6), we obtain

\[
\eta(S,C) = 2 \beta \left( \frac{\gamma_2 C S}{\gamma_2 C + \sigma S} +  \frac{\gamma S C}{S + C} \right) - \frac{(\sigma + \mu) \gamma_2 C}{\sigma} - \frac{r u \gamma_2 C}{\sigma + b u \gamma_2 C} = 0 \tag{10}
\]

Substituting \( C = 0 \) in equations (9) and (10), we get \(\chi(S_a, 0) = 0\) and \(\eta(S_b, 0) = 0\), and thus we obtain 
\[
S_a = \frac{\lambda}{\delta + \mu}, \quad S_b = \frac{1}{{R_0}}
\]

For equation (9), with $C \neq 0$, we evaluate
\[
\frac{dS}{dC} = -\frac{\partial \chi / \partial C} {\partial \chi /\partial S} = \frac{\alpha\lambda + 2\beta \left( \frac{\sigma \gamma_2}{(\gamma_2 C + \sigma S)^2} + \frac{\gamma}{(S + C)^2} \right) S^2}{2 \beta \left( \left( \frac{\gamma_2}{\gamma_2 C + \sigma S} \right)^2 + \frac{\gamma}{(S + C)^2} \right) C^2 + (\delta + \mu)}> 0.
\]
Thus, we can conclude that the isocline given by equation (9) is a monotonically increasing function of C.

\vspace{0.5cm}
Further from equation (10), with $C \neq 0$, we evaluate 

\[ \frac{dS}{dC} = - \frac{\partial \eta / \partial C}{\partial \eta / \partial S} = - \left[ \frac{2 \beta \left( \frac{\sigma \gamma_2 S^2}{(\gamma_2 C + \sigma S)^2} + \frac{\gamma S^2}{(S + C)^2} \right) - \left(\frac{(\sigma + \mu) \gamma_2}{\sigma} + \frac{\sigma r u \gamma_2}{(\sigma + b u \gamma_2 C)^2} \right)}{2 \beta \left( \left( \frac{\gamma_2 C}{\gamma_2 C + \sigma S} \right)^2 + \frac{\gamma C^2}{(S + C)^2} \right)} \right] \]

Therefore, \(\frac{dS}{dC} < 0\) if

\[
2 \beta \left( \frac{\sigma \gamma_2 S^2}{(\gamma_2 C + \sigma S)^2} + \frac{\gamma S^2}{(S + C)^2} \right) > \left(\frac{(\sigma + \mu) \gamma_2}{\sigma} + \frac{\sigma r u \gamma_2}{(\sigma + b u \gamma_2 C)^2} \right)
\tag{11}
\]
Here, we can conclude that the isocline given by equation (10) is a monotonically increasing function of C if equation (11) holds. These conditions suggest that the two isoclines (9) and (10) intersect at a single, unique point where the above inequalities are satisfied. Thus, from the above discussion, we can infer that the endemic equilibrium exists if the conditions mentioned above hold.

\section{Stability Analysis of the Mathematical Model}

 In the study of dynamical systems, stability can be visualized geometrically by examining how trajectories in a phase space (or state space) behave over time in relation to equilibrium points. Equilibrium points (or fixed points) represent steady states in which the system remains if it is not perturbed. For an epidemiological model, these could represent steady states of infection levels, such as a disease-free equilibrium (DFE) or an endemic equilibrium (EE).

\subsection{Local Stability Analysis of HBV Mathematical Model}
Local stability around an equilibrium point means that trajectories starting near the equilibrium will move back toward it if slightly perturbed. Geometrically, in phase space, this means that nearby trajectories spiral or converge toward the equilibrium. To investigate the local stability around both equilibrium points (the disease-free equilibrium and the endemic equilibrium), we determine the Jacobian matrix and analyze its eigenvalues.

\newtheorem{theorem}{Theorem}
\begin{theorem} If \({\mathcal{R}_0} < 1 \), then the DFE \(E^0\) of the system (1) is locally asymptotically stable.
\end{theorem}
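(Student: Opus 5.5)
The plan is to linearize system (1) at $E^0$, compute the Jacobian explicitly, and show that all its eigenvalues have negative real parts whenever $\mathcal{R}_0<1$. Throughout I will assume, as is implicit in the definition of $\mathcal{R}_0$ (which must be positive and finite), that $\gamma_2=\theta+\tau+\mu-\alpha\lambda>0$.

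\textbf{Step 1 (smoothness near $E^0$).} The harmonic-mean terms $2AS/(A+S)$ and $2CS/(C+S)$ are smooth in a neighbourhood of $E^0$, because $S^0=\lambda/(\delta+\mu)>0$ keeps the denominators away from zero. Their partial derivatives are
\[
\frac{\partial}{\partial A}\frac{2AS}{A+S}=\frac{2S^2}{(A+S)^2},\qquad \frac{\partial}{\partial S}\frac{2AS}{A+S}=\frac{2A^2}{(A+S)^2},
\]
and analogously for the $C$-term. At $E^0$ these equal $2$ and $0$ respectively. The treatment term has derivative $ru/(1+buA)^2$, which equals $ru$ at $A=0$. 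This matches the matrices $F$ and $V$ already computed in the $\mathcal{R}_0$ section.

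\textbf{Step 2 (Jacobian and its block structure).} With $k_1=ru+\sigma+\mu$, the Jacobian at $E^0$ is
\[
J(E^0)=\begin{pmatrix} -(\delta+\mu) & -2\beta & -\alpha\lambda-2\beta\gamma & 0\\ 0 & 2\beta-k_1 & 2\beta\gamma & 0\\ 0 & \sigma & -\gamma_2 & 0\\ \delta & ru & \theta & -\mu \end{pmatrix}.
\]
The $R$-column has a single nonzero entry, and the $S$-column vanishes in the $A$ and $C$ rows. Expanding the determinant along these yields two eigenvalues $-\mu$ and $-(\delta+\mu)$. The remaining eigenvalues are those of the $2\times2$ block
\[
J_1=\begin{pmatrix} 2\beta-k_1 & 2\beta\gamma\\ \sigma & -\gamma_2\end{pmatrix},
\]
which is exactly $F-V$.

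\textbf{Step 3 (Routh--Hurwitz for $J_1$).} Both eigenvalues of $J_1$ have negative real parts iff $\operatorname{tr}J_1<0$ and $\det J_1>0$.

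For the determinant,
\[
\det J_1=-(2\beta-k_1)\gamma_2-2\beta\gamma\sigma=k_1\gamma_2\left(1-\frac{2\beta}{k_1}-\frac{2\beta\gamma\sigma}{k_1\gamma_2}\right)=k_1\gamma_2(1-\mathcal{R}_0),
\]
which is positive when $\mathcal{R}_0<1$.

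For the trace, $\operatorname{tr}J_1=2\beta-k_1-\gamma_2$. Since ${\mathcal{R}_0}_C\ge 0$, we have ${\mathcal{R}_0}_A=2\beta/k_1\le\mathcal{R}_0<1$. Hence $2\beta<k_1$ and the trace is negative.

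Therefore all four eigenvalues of $J(E^0)$ have negative real parts, and by the linearization (Hartman--Grobman) theorem $E^0$ is locally asymptotically stable. Equivalently, one could invoke Theorem 2 of van den Driessche--Watmough, since $F\ge0$, $V$ is a nonsingular M-matrix when $\gamma_2>0$, and the disease-free subsystem is stable.

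\textbf{Main obstacle.} The only delicate point is Step 1: justifying that the harmonic-mean incidence is differentiable at the equilibrium. It is not differentiable at $(A,S)=(0,0)$, but $S^0>0$ avoids that point. The positivity assumption on $\gamma_2$ must also be stated explicitly, since otherwise $\mathcal{R}_0<1$ can hold with $\gamma_2<0$, and then $C$ grows.
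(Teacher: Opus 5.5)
Your proposal is correct and follows the paper's proof. Both linearize at $E^0$, split off the eigenvalues $-\mu$ and $-(\delta+\mu)$, and apply the trace/determinant (Routh--Hurwitz) test to the $2\times 2$ infected block, whose determinant is $(ru+\sigma+\mu)\gamma_2(1-\mathcal{R}_0)$. Your version is slightly tighter than the paper's in three places: you state the implicitly needed hypothesis $\gamma_2=\theta+\tau+\mu-\alpha\lambda>0$, you justify $\mathcal{R}_{0A}\le\mathcal{R}_0$ for the trace condition, and you have the correct entry $\delta$ (not $\sigma$) in position $(4,1)$ of the Jacobian.
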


\begin{proof} The Jacobian matrix of the model at DFE \( E^0 = \left( \frac{\lambda}{\delta + \mu}, 0, 0, \frac{\lambda \delta}{\mu(\delta + \mu)} \right) \) is

\[
J_{E^0} = 
\begin{bmatrix}
-(\delta + \mu) & -2 \beta & -\lambda \alpha - 2 \beta \gamma & 0 \\
0 & 2 \beta - \sigma - \mu - ru & 2 \beta \gamma & 0 \\
0 & \sigma & \alpha \lambda - \theta - \tau - \mu & 0 \\
\sigma & ru & \theta & -\mu
\end{bmatrix}
\]

Here, this Jacobian has eigenvalues -$\mu$, -$(\delta + \mu)$, and the remaining two eigenvalues are obtained from the reduced Jacobian matrix
\[
J_{E^0} = 
\begin{bmatrix}
2 \beta - \sigma - \mu - ru & 2 \beta \gamma \\
\sigma & \alpha \lambda - \theta - \tau - \mu \\
\end{bmatrix}
\]
whose characteristic equation is
\[
m^2 + p m + q = 0
\]
where
\begin{align*}
    p &= (\sigma + \mu + ru - 2 \beta) + (\theta + \tau + \mu -\alpha \lambda) \\
      &= (\theta + \tau + \mu -\alpha \lambda) + (\sigma + \mu + ru) (1 - \frac{2 \beta}{\sigma + \mu + ru}) \\
      &= (\theta + \tau + \mu -\alpha \lambda) + (\sigma + \mu + ru) (1 - \mathcal{R}_{0A}) \\
    q &= (\sigma + \mu + ru - 2 \beta) (\theta + \tau + \mu -\alpha \lambda) - 2 \beta \gamma \sigma \\
      &= (\sigma + \mu + ru) (1 - \mathcal{R}_{0A}) (\theta + \tau + \mu -\alpha \lambda) - 2 \beta \gamma \sigma  \\
      &= (\sigma + \mu + ru) (1 - \mathcal{R}_{0A}) (\theta + \tau + \mu -\alpha \lambda) \left( 1 - \frac{\mathcal{R}_{0C}}{1 - \mathcal{R}_{0A}} \right) \\
      &= (\sigma + \mu + ru) (1 - \mathcal{R}_{0A}) (\theta + \tau + \mu -\alpha \lambda) \left(\frac{1 - \mathcal{R}_{0}}{1 - \mathcal{R}_{0A}} \right).
\end{align*}

From the above equation, we observe that $p > 0$ if $\mathcal{R}_{0A} < 1$ and $q > 0$ if ${\mathcal{R}_0}_A < 1$ \& ${\mathcal{R}_0} < 1$.


Therefore, when ${\mathcal{R}_0} < 1$, all eigenvalues have negative real parts. 
This implies that the system is locally asymptotically stable whenever ${\mathcal{R}_0} < 1$.
\end{proof}

\begin{theorem} If $\mathcal{R}_0 > 1$ then the endemic equilibrium point $E^*$ = ($S^*, A^*, C^*, R^*$) is locally asymptotically stable.
\end{theorem}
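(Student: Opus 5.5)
The plan is to linearize system (1) at $E^*=(S^*,A^*,C^*,R^*)$ and show that every eigenvalue of the Jacobian has negative real part, using the Routh--Hurwitz criteria.

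\textbf{Step 1: isolate the $R$-equation.} $R$ does not appear in the $S$, $A$, $C$ equations, so the Jacobian is block lower triangular. One eigenvalue is $-\mu$, and the remaining three are the eigenvalues of the $3\times 3$ Jacobian of the $(S,A,C)$ subsystem.

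\textbf{Step 2: write the $3\times3$ Jacobian.} Write $\kappa_1=\frac{2\beta A^{*2}}{(A^*+S^*)^2}$, $\kappa_2=\frac{2\beta\gamma C^{*2}}{(C^*+S^*)^2}$, $\kappa_3=\frac{2\beta S^{*2}}{(A^*+S^*)^2}$, $\kappa_4=\frac{2\beta\gamma S^{*2}}{(C^*+S^*)^2}$, $\phi=\frac{ru}{(1+buA^*)^2}$ and $\gamma_2=\theta+\tau+\mu-\alpha\lambda>0$. Then
\[
J^*=\begin{bmatrix} -(\kappa_1+\kappa_2+\delta+\mu) & -\kappa_3 & -\alpha\lambda-\kappa_4\\ \kappa_1+\kappa_2 & \kappa_3-\phi-\sigma-\mu & \kappa_4\\ 0&\sigma&-\gamma_2\end{bmatrix}.
\]
To fix the sign of $a_{22}=\kappa_3-\phi-\sigma-\mu$, use the equilibrium relation (6) divided by $A^*$:
\[
\sigma+\mu+\frac{ru}{1+buA^*}=\frac{2\beta S^*}{A^*+S^*}+\frac{2\beta\gamma C^*S^*}{A^*(C^*+S^*)}.
\]
Since $\kappa_3<\frac{2\beta S^*}{A^*+S^*}$, this gives
\[
a_{22}<\frac{ru}{1+buA^*}-\phi-\frac{2\beta\gamma C^*S^*}{A^*(C^*+S^*)}.
\]
Similarly, equation (7) gives $\sigma A^*=\gamma_2 C^*$.

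\textbf{Step 3: Routh--Hurwitz.} Write the characteristic polynomial as $m^3+a_1m^2+a_2m+a_3$. Here $a_1=-\operatorname{tr}J^*$, $a_2$ is the sum of the principal $2\times2$ minors, and $a_3=-\det J^*$. The goal is to verify $a_1>0$, $a_3>0$ and $a_1a_2>a_3$.

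For $a_1$, the diagonal entries $-(\kappa_1+\kappa_2+\delta+\mu)$ and $-\gamma_2$ are negative. The only delicate term is $a_{22}$, which I will bound using the relations from Step 2.

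For $a_3$, expand the determinant along the last row and substitute $\sigma=\gamma_2C^*/A^*$. The aim is to express $-\det J^*$ as a sum of positive terms plus a term proportional to the slope difference of the two isoclines at their intersection. That difference is positive exactly when (9) and (10) cross transversally, which the isocline section guarantees under condition (11) when $\mathcal{R}_0>1$. Then $a_1a_2-a_3$ is expanded, and the cross terms are cancelled using the same equilibrium identities.

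\textbf{Main obstacle.} The hardest step is the sign of $a_{22}$, and hence of $a_3$ and $a_1a_2-a_3$. The harmonic-mean incidence makes $\kappa_3$ a positive feedback. The saturated treatment also satisfies $\phi<\frac{ru}{1+buA^*}$, so the treatment term does not automatically make $a_{22}$ negative.

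If the direct bound fails, I will impose an explicit sufficient condition, for instance
\[
\frac{ru\,buA^*}{(1+buA^*)^2}<\frac{2\beta\gamma C^*S^*}{A^*(C^*+S^*)},
\]
which makes $a_{22}<0$. I will then state the theorem as holding under that condition together with $\mathcal{R}_0>1$. Under this condition all off-diagonal sign patterns are fixed, and the Routh--Hurwitz inequalities follow by grouping terms.
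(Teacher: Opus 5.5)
Your Steps 1--2 are correct, and Routh--Hurwitz on the $3\times 3$ block is the right framework. The paper instead row-reduces $J_1^{|\ast|}$ to triangular form and reads eigenvalues off the diagonal, but row operations do not preserve eigenvalues. The gap is in Step 3, starting with $a_3$. Add $(\gamma_2/\sigma)$ times column 2 to column 3 of $J^*$ and expand along the last row. With subscripts denoting partial derivatives at $(S^*,C^*)$, this gives
\[
a_3=-\det J^*=\sigma\bigl(\chi_S\eta_C-\chi_C\eta_S\bigr)=\sigma\,\chi_S\eta_S\,(s_\chi-s_\eta),\qquad s_\chi=-\frac{\chi_C}{\chi_S},\quad s_\eta=-\frac{\eta_C}{\eta_S},
\]
where $\chi_S=-(\kappa_1+\kappa_2+\delta+\mu)<0$, $\eta_S=\kappa_1+\kappa_2>0$ and $\chi_C=-\alpha\lambda-\kappa_4-\gamma_2\kappa_3/\sigma<0$. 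Several things follow:
\begin{itemize}
\item The isocline (9) is actually decreasing; the paper's $dS/dC>0$ drops a sign.
\item $a_3>0$ holds iff $s_\eta>s_\chi$. Transversality of the crossing only gives $a_3\neq0$, not its sign.
\item Condition (11) is exactly $\eta_C>0$ at the crossing, which is the unfavourable case.
\item The configuration the isocline section derives, (9) increasing and (10) decreasing, would force $a_3<0$ and hence instability.
\end{itemize}
So that section cannot be imported.

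Your fallback $a_{22}<0$ does not repair this, because $a_{23}a_{32}=\sigma\kappa_4>0$ (the $A\to C\to A$ loop), so the sign pattern alone decides nothing. Put $d_1=\kappa_1+\kappa_2+\delta+\mu$, $d_2=-a_{22}$ and $D=\gamma_2 d_2-\sigma\kappa_4=-\sigma\eta_C$. Then
\[
a_3=d_1D+(\kappa_1+\kappa_2)\bigl[\gamma_2\kappa_3+\sigma(\alpha\lambda+\kappa_4)\bigr].
\]
Equations (6)--(7) give
\begin{align*}
d_2&=\frac{2\beta S^*A^*}{(A^*+S^*)^2}+\frac{2\beta\gamma C^*S^*}{A^*(C^*+S^*)}-\frac{rbu^2A^*}{(1+buA^*)^2},\\
D&=\gamma_2\frac{2\beta S^*A^*}{(A^*+S^*)^2}+\sigma\frac{2\beta\gamma S^*C^*}{(C^*+S^*)^2}-\gamma_2\frac{rbu^2A^*}{(1+buA^*)^2}.
\end{align*}
Your condition is exactly $d_2>\frac{2\beta S^*A^*}{(A^*+S^*)^2}$. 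That does not give $\gamma_2 d_2>\sigma\kappa_4$, so $D<0$, and hence possibly $a_3<0$, is still allowed. A workable hypothesis is $D>0$: it implies $a_{22}<0$ and holds automatically when $b=0$.

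Finally, $a_1a_2-a_3$ contains the term $-(\kappa_1+\kappa_2)\sigma(\alpha\lambda+\kappa_4)$, coming from the cycle $S\to A\to C\to S$. The $\kappa_4$ part is absorbed by $2d_1d_2\gamma_2=2d_1(D+\sigma\kappa_4)$. The vertical-transmission part $(\kappa_1+\kappa_2)\sigma\alpha\lambda$ is cancelled by no equilibrium identity and needs an explicit bound; for instance, $D\ge 0$ together with $\alpha\lambda\le\kappa_4$ suffices. As written, ``grouping terms'' does not close the argument. You need to state hypotheses of this kind and verify all three Routh--Hurwitz inequalities under them.
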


\begin{proof}
The Jacobian of the given system is
\[
\hspace{-1cm}
J = 
\begin{bmatrix}
- \left(2 \beta \left( \frac{A^*}{A^* + S^*} \right)^2 + 2 \beta \gamma \left( \frac{C^*}{C^* + S^*} \right)^2 + (\delta + \mu) \right) & -2 \beta \left( \frac{S^*}{A^* + S^*} \right)^2 & - \left(2 \beta \gamma \left( \frac{S^*}{C^* + S^*} \right)^2 + \alpha \lambda \right) & 0 \\ 
2 \beta \left( \frac{A^*}{A^* + S^*} \right)^2 + 2 \beta \gamma \left( \frac{C^*}{C^* + S^*} \right)^2 & 2 \beta \left( \frac{S^*}{A^* + S^*} \right)^2 - \frac{r u}{(1 + b u A^*)^2} -(\sigma + \mu) & 2 \beta \gamma \left( \frac{S^*}{C^* + S^*} \right)^2 & 0 \\ 
0 & \sigma &  \alpha \lambda - \theta - \tau - \mu & 0 \\ 
\delta & \frac{r u}{(1 + b u A^*)^2} & \theta & -\mu \\ 
\end{bmatrix}
\]

It is easy to see that one eigenvalue of the Jacobian matrix $J$, evaluated at the disease–present equilibrium $E^{*}$, is negative; in fact, $x_{1} = -\mu < 0$. 
To study the remaining three eigenvalues, we consider the following reduced matrix:
\[
J_1^{|\ast|} = 
\begin{bmatrix}
-l_1 & -2 \beta P_1 & -l_2 \\ 
l_1 - q_3 & l_3 & 2 \beta \gamma P_2 \\ 
0 & \sigma & -q_2 \\ 
\end{bmatrix}
\]
		
where
\begin{align*}
    l_1 &= 2 \beta \left( \frac{A^*}{A^* + S^*} \right)^2 + 2 \beta \gamma \left( \frac{C^*}{C^* + S^*} \right)^2 + (\delta + \mu), \\
    l_2 &= \alpha \lambda + 2 \beta \gamma \left( \frac{S^*}{C^* + S^*} \right)^2, \\
     l_3 &= 2 \beta \left( \frac{S^*}{A^* + S^*} \right)^2 - \frac{r u}{(1 + b u A^*)^2} - (\sigma + \mu), \\
    q_2 &= \theta + \tau + \mu - \lambda \alpha, \\
    q_3 &= \delta + \mu, \\
    q_4 &= \sigma + \mu, \\
    P_1 &= \left( \frac{S^*}{A^* + S^*} \right)^2, \\ 
    P_2 &= \left( \frac{S^*}{C^* + S^*} \right)^2.
\end{align*}

Applying elementary row operations, we obtain
\[
J_2^{|\ast|} = 
\begin{bmatrix}
-l_1 & -2 \beta P_1 & -l_2 \\ 
0 & -Z_1 & Z_2 \\ 
0 & 0 & -Z_3 \\ 
\end{bmatrix}
\]
where,
\begin{align*}
    Z_1 &= l_1 l_3 - 2 \beta P_1 (l_1 - q_3), \\
    Z_2 &= 2 \beta \gamma P_2 l_1 - l_2 (l_1 - q_3), \\
    Z_3 &= \sigma Z_2 - q_2 Z_1 \\
\end{align*}

\text{The eigenvalues of } \( J_2^{|\ast|} \) \text{ takes the following form:}

\begin{align*}
    x_2 &= -l_1 < 0, \\
    x_3 &= -Z_1 < 0  \\
    x_4 &= -Z_3 < 0 
\end{align*}
Clearly, the second eigenvalue $x_2$ has a negative real part. Moreover, the last two eigenvalues also possess negative real components, provided that \({\mathcal{R}_0}_C >1\) and \({\mathcal{R}_0}_A < \frac{Z_2}{\gamma Z_1}\), as well as $Z_1 > 0$ and $Z_2 > 0$. Under these circumstances, all eigenvalues exhibit a negative real component, signifying that our system is locally asymptotically stable.
\end{proof}

\subsection{Global Stability Analysis of HBV Mathematical Model}Global stability implies that any trajectory, regardless of where it starts, will eventually converge to the equilibrium point. This can be visualized as a valley with a single lowest point, where every path or slope eventually leads to the same equilibrium. In this section, we used the M-matrix method to prove the global asymptotic stability of the disease-free equilibrium. Furthermore, we applied the geometric approach, which extends the Lyapunov theory \cite{li1995global}, to establish the global asymptotic stability of the endemic equilibrium.

\subsubsection{Global Stability Analysis at Disease-Free Equilibrium}This section offers a brief analysis of the Castillo-Chavez method \cite{castillo2002mathematical} to demonstrate the global stability of the model at the disease-free equilibrium (DFE) point \cite{castillo2002computation,khan2017transmission}. Following this approach, we first restructure the model into a subsystem with two equations:

\begin{align}
    \frac{d\mathcal{X}_1}{dt} &= G(\mathcal{X}_1, \mathcal{X}_2), \tag{12} \\
    \frac{d\mathcal{X}_2}{dt} &= H(\mathcal{X}_1, \mathcal{X}_2). \tag{13}
\end{align}

In the above subsystem (12-13), $\mathcal{X}_1$ and $\mathcal{X}_2$ represent the uninfected and infected populations, respectively. Here, $\mathcal{X}_1 = (S, R) \in \mathbb{R}^2$ corresponds to uninfected individuals, while $\mathcal{X}_2 = (A, C) \in \mathbb{R}^2$  includes individuals in different stages of infection, such as acute infection and chronic carrier states. The disease-free equilibrium (DFE) denoted as $E^0$, is given by $E^0 = (\mathcal{X}_1^0, 0)$. To prove the global stability of the DFE, the following two conditions must hold:

1. If $\frac{d\mathcal{X}_1}{dt} = G(\mathcal{X}_1, 0)$, then the state $\mathcal{X}_1$ is globally asymptotically stable.

2.  $H(\mathcal{X}_1, \mathcal{X}_2) = B\mathcal{X}_2 - \Tilde{H}(\mathcal{X}_1, \mathcal{X}_2)$ must hold, where $H(\mathcal{X}_1, \mathcal{X}_2) \geq 0$ for all $(\mathcal{X}_1, \mathcal{X}_2) \in \Omega$.

In this condition, $B = D_{\mathcal{X}_2}H(\mathcal{X}_1^0, 0)$ is an M matrix, which means that all off-diagonal elements are positive. The region $\Omega$ represents the feasible domain. Given these conditions, we present the following lemma:

\vspace{0.5cm}
\textbf{Lemma 1.}: For \( R_0 < 1 \), the equilibrium point \( E^0 = (\mathcal{X}_1^0, 0) \) of our system is globally asymptotically stable if the above conditions are satisfied.

\begin{theorem}
If \( R_0 < 1 \), then the model is globally asymptotically stable at DFE \( E^0 \). However, if \( R_0 > 1 \), the model is considered unstable.
\end{theorem}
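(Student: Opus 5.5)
I would follow the Castillo-Chavez framework set up just before Lemma 1. The plan is to verify its two conditions for the splitting $\mathcal{X}_1=(S,R)$, $\mathcal{X}_2=(A,C)$, and then invoke Lemma 1 for the case $\mathcal{R}_0<1$. For $\mathcal{R}_0>1$ I would return to the characteristic polynomial from the proof of the local stability theorem for $E^0$.

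\emph{Condition 1.} Setting $A=C=0$ in (1) and (4) gives the linear system
\[
\dot S=\lambda-(\delta+\mu)S,\qquad \dot R=\delta S-\mu R .
\]
It integrates explicitly: $S(t)\to \lambda/(\delta+\mu)$, and substituting into the second equation gives $R(t)\to \delta\lambda/(\mu(\delta+\mu))$. Hence $\mathcal{X}_1^0$ is globally asymptotically stable for the reduced system.

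\emph{Condition 2.} I would compute
\[
B=D_{\mathcal{X}_2}H(\mathcal{X}_1^0,0)=\begin{pmatrix}2\beta-\sigma-\mu-ru & 2\beta\gamma\\ \sigma & -(\theta+\tau+\mu-\alpha\lambda)\end{pmatrix},
\]
whose off-diagonal entries are nonnegative, so $B$ is an M-matrix in the paper's sense. Next I would write $\tilde H=B\mathcal{X}_2-H$. Its second component is identically zero, and its first component is
\[
\tilde H_1=\frac{2\beta A^2}{A+S}+\frac{2\beta\gamma C^2}{C+S}-\frac{bu^2 r A^2}{1+buA}.
\]
The two infection terms are nonnegative because the harmonic mean $2xy/(x+y)$ never exceeds $2x$ or $2y$. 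The treatment term has the wrong sign; this is the main obstacle.

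To handle it, my first attempt would not be to force $\tilde H\ge 0$ pointwise. Instead I would use the comparison idea underlying Lemma 1. On $\Omega$ the infected subsystem satisfies $\dot{\mathcal{X}}_2\le B\mathcal{X}_2+\bigl(ruA-\tfrac{ruA}{1+buA}\bigr)e_1$. I would pair this with the Lyapunov function $L=q_2A+2\beta\gamma C$, where $q_2=\theta+\tau+\mu-\alpha\lambda$; this is the left Perron vector of $V^{-1}$. Its derivative along trajectories is
\[
\dot L\le q_2\Bigl(2\beta-\sigma-\mu-\tfrac{ru}{1+buA}\Bigr)A+2\beta\gamma\sigma A .
\]
Using $\mathcal{R}_0<1$, the bracket becomes $-(ru+\sigma+\mu)q_2(1-\mathcal{R}_0)A$ plus a correction of size $q_2\,ru\,buA^2/(1+buA)$. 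I would then aim to show that trajectories eventually enter the region where $A$ is small enough for the quadratic correction to be dominated by the linear negative term. The expected route is: boundedness in $\Omega$, then $\limsup A$ small, then $\dot L<0$.

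Once $A,C\to0$, LaSalle's invariance principle and Condition 1 together give convergence to $E^0$. Combined with local asymptotic stability from the earlier theorem, this yields global asymptotic stability by Lemma 1.

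For the instability claim, I would reuse the factorisation $q=(\sigma+\mu+ru)q_2(1-\mathcal{R}_0)$ obtained in the proof of the local stability theorem. When $\mathcal{R}_0>1$, $q<0$, so the $2\times2$ reduced Jacobian has a real positive eigenvalue. Hence $E^0$ is unstable.
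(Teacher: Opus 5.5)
\textbf{There is a genuine gap, and under $\mathcal{R}_0<1$ alone it cannot be closed, because the statement is false in general.}

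Your computations for Conditions 1 and 2 are correct. You also rightly spot what the paper's own proof glosses over: $\tilde H_1$ contains $-rbu^2A^2/(1+buA)<0$, so the Castillo-Chavez hypothesis $\tilde H\ge 0$ fails. The paper ``resolves'' this by requiring $1/(1+buA)\ge 1$, which holds only at $A=0$.

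The gap is your next step. Nothing in your argument forces $\limsup A$ to become small, and nothing can. The saturated treatment produces a backward bifurcation, as in the Zhang--Liu model cited in the introduction. At an equilibrium with $A>0$ one has $C=\sigma A/q_2$. Combining the $S$- and $A$-equations gives $S=\left[\lambda(1-\alpha\sigma A/q_2)-ruA/(1+buA)-(\sigma+\mu)A\right]/(\delta+\mu)$. Dividing the $A$-equation by $A$ then leaves
\[
\frac{2\beta S}{A+S}+\frac{2\beta\gamma\sigma S}{\sigma A+q_2 S}=\sigma+\mu+\frac{ru}{1+buA}.
\]
As $A\to 0$, the left side tends to $(\sigma+\mu+ru)\mathcal{R}_0$ and the right side to $\sigma+\mu+ru$. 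For large $b$, however, the right side drops to about $\sigma+\mu$ once $A\gg 1/(bu)$. The left side stays near its initial value as long as $A$ is small compared with $S$ and with $q_2S/\sigma$. Hence whenever $(\sigma+\mu)/(\sigma+\mu+ru)<\mathcal{R}_0<1$ and $b$ is large, the two sides cross at some $A>0$.

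A concrete instance: take the Table values except $\beta=0.0461$ (so $\mathcal{R}_0\approx 0.90$) and $b=5$. At $A\to 0$ the two sides are $0.843<0.937$. At $A=1$, where $C\approx 19.4$ and $S\approx 133.7$, they are $0.747>0.550$. So an endemic equilibrium lies in $\Omega$, and $E^0$ is not globally attracting.

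Your own Lyapunov function shows what is actually true. Instead of comparing the treatment term with $ruA$, drop it, since $-ruA/(1+buA)\le 0$:
\[
\dot L\le A\left[2\beta(q_2+\gamma\sigma)-q_2(\sigma+\mu)\right]=q_2A\left[(\sigma+\mu+ru)\mathcal{R}_0-(\sigma+\mu)\right].
\]
This gives $\dot L\le -\kappa A$ with $\kappa>0$ exactly when $\mathcal{R}_0<(\sigma+\mu)/(\sigma+\mu+ru)$, i.e.\ when the reproduction number computed without treatment is below one. In that case:
\begin{itemize}
\item $A$ is integrable on $(0,\infty)$ with bounded derivative, so $A\to 0$;
\item then $C\to 0$ from $\dot C=\sigma A-q_2C$;
\item your Condition 1, via asymptotic autonomy, gives convergence to $E^0$.
\end{itemize}

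This covers the paper's Set A, where $\mathcal{R}_0\approx 0.185<0.376$. In the window $(\sigma+\mu)/(\sigma+\mu+ru)<\mathcal{R}_0<1$, however, the outcome depends on $b$. So the theorem has to be restated with such a hypothesis rather than proved as written.

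Your instability argument for $\mathcal{R}_0>1$ (via $q<0$) is correct, and it supplies a part the paper's proof omits entirely.
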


\begin{proof}
We assume the uninfected and infected populations are represented by \( X_1 = (S, R) \) and \( X_2 = (A, C) \), respectively. Now we define \( E^0 = (X_1^0, 0) \), that is,
\[
X_1^0 = \left( S^0, R^0 \right) = \left( \frac{\lambda}{\delta + \mu}, \frac{\lambda \delta}{\beta + \delta + \mu} \right).
\]

By implementing the proposed model, we have
\[
\frac{dX_1}{dt} = G(X_1, X_2),
\]
and for \( S = S^0 \), \( R = R^0 \), we get
\[ \frac{dX_1^0}{dt} = G(X_1^0, 0), \]
i.e., \( X_2 = 0 \Rightarrow (A, C) = (0, 0) \)

or
\[
G(X_1^0, 0) = \begin{bmatrix} \lambda - (\delta + \mu) S^0 \\ \delta S^0 - \mu R^0 \end{bmatrix} = 0.
\]

Thus, by the theorem above as \( t \to \infty \), \( X_1 \to X_1^0 \), so \( X_1 = X_1^0 \) is stable globally asymptotically.

We now proceed to prove the second scenario, which is
\[
BX_2 - \hat{H}(X_1, X_2) = H(X_1, X_2),
\]
where, \( \hat{H}(X_1, X_2) > 0 \) for \( (X_1, X_2) \in \Omega \). Let \( BX_2 - H(X_1, X_2) = \hat{H}(X_1, X_2) \), where \( B = D_{X_2} H(X_1^0, 0) \) is a Metzler matrix (M-matrix).

\subsection*{Metzler Matrix \( B \)}
\[
B = \begin{bmatrix} 2 \beta - (\sigma + \mu + r u) & 2 \beta \gamma \\ \sigma & \alpha \lambda - (\theta + \tau + \mu) \end{bmatrix}
\]
Define the function \( \hat{H}(X_1, X_2) \) as:

\begin{align*}
\hat{H}(X_1, X_2) &= \begin{bmatrix} 
2\beta - (\sigma + \mu + r u) & 2 \beta \\
\sigma & \alpha \lambda - (\theta + \tau + \mu)
\end{bmatrix} \begin{bmatrix} A \\ C \end{bmatrix} - \begin{bmatrix}
2\beta \left(\frac{A S}{A + S} \right) + 2\beta \gamma \left(\frac{C S}{C + S} \right) - \frac{r u A}{1 + b u A} - (\sigma + \mu) A \\
(\alpha \lambda - \theta -\tau -\mu) C + \sigma A
\end{bmatrix} \\
&= \begin{bmatrix}
        2\beta \left( 1 - \frac{A S}{A + S} \right) + 2\beta \gamma \left( 1 - \frac{C S}{C + S} \right) + r u A \left( \frac{1}{1 + b u A} - 1 \right) \\
        0
\end{bmatrix}
\end{align*}

Here,
\[
0 \leq \hat{H}(X_1, X_2), \quad \text{if} \quad \frac{A S}{A + S} \leq 1, \quad \frac{C S}{C + S} \leq 1, \frac{1}{1 + b u A} \geq 1.
\]

Since conditions (1) and (2) are satisfied, it follows from Lemma 1 that the disease-free equilibrium (DFE) $E^0$ is globally asymptotically stable. Moreover, the matrix $B$ is an M-matrix.
\end{proof}

\subsubsection{Global Stability Analysis at Endemic Equilibrium point}For the global stability of our model at the endemic equilibrium point \( E^* \), the geometrical approach \cite{li1996geometric} is used. Thus, we analyze the sufficient condition under which \( E^*\) is globally asymptotically stable. Let us, consider the differential equation
\[
\dot{x} = f(x), \tag{14}
\]
where the open set \( D \subset \mathbb{R}^n \) is simply connected, and \( f : D \rightarrow \mathbb{R}^n \) is a function such that \( f \in C^1(D) \). Assuming that \( f(x^*) = 0 \) is the solution of equation (14), for \( x(t, x_0) \), the following are true:

1. There exists a compact absorbing set \( K \in D \).

2. System (14) has a unique equilibrium.

\noindent The solution \( x^* \) is considered globally asymptotically stable in \(D \), if it is locally asymptotically stable and all trajectories in \(D \) approach the equilibrium \( x^* \). For \( n \geq 2 \), a condition that prevents the existence of non-constant periodic solutions of equation (14) is known as the Bendixson criterion. The classical Bendixson criterion \( \operatorname{div} f(x) < 0 \) for \( n = 2 \) is robust under \( C^1 \) \cite{li1996geometric}. Additionally, for equation (14) a point \( x_0 \in D \) is wandering if there is a neighbourhood \( G \) of \( x_0 \) and \( \epsilon > 0 \) such that \( G \cap x(t, G) \) is empty for all \( t > \epsilon \). This establishes the following global stability principle for an autonomous system of any finite dimension.

\vspace{0.5cm}

\textbf{Lemma 2.}: If the Bendixson criterion and conditions (1)–(2) hold for equation (14) meaning that the system remains robust under \( C^1 \) local perturbation of \( f \) at all non-equilibrium, non-wandering points for equation (14)), then \( x^* \) is globally asymptotically stable in \( D \), provided it is stable.

\textit{Proof.} Define a matrix-valued function \( M \) on \( D \) by
\[
M(x) = \begin{pmatrix} n \\ 2 \end{pmatrix} \times \begin{pmatrix} n \\ 2 \end{pmatrix}. \tag{15}
\]

Furthermore, assume that \( M^{-1} \) exists and is continuous for \( x \in K \). Now define
\[
\bar{q} = \limsup_{t \to \infty} \frac{1}{t} \int_0^t \mu \left( B \left( x(s, x_0) \right) \right) ds, \tag{16}
\]
where \( J^{[2]} \) is the second additive compound matrix \cite{muldowney1990compound} of \( J \), that is, \( J(x) = Df(x) \) and \( B = M_f M^{-1} + MJ^{[2]}M^{-1} \). Let \( \mu(B) \) denote the Lozinski measure of the matrix \( B \) with respect to the norm \( \| \cdot \| \) in \( \mathbb{R}^n \) \cite{li1996geometric} defined by
\[
\mu(B) = \lim_{x \to 0} \frac{|I + Bx| - 1}{x}. \tag{17}
\]

This, if \( \bar{q} < 0 \), in this case, the existence of an orbit results in a simple closed rectifiable curve, such as a periodic orbit or a heteroclinic cycle.

\vspace{0.5cm}

\textbf{Lemma 3}: Let \( D \) be simply connected and let conditions (1)–(2) be satisfied. Then the unique equilibrium \( x^* \) of equation (14) is globally asymptotically stable in \( D \), if \( \bar{q} < 0 \).

Next, we apply these techniques to establish the global stability of the proposed model at the endemic equilibrium.

\begin{theorem}: If \( R_0 > 1 \), then the model is globally asymptotically stable at EE points \( E^* = (S^*, A^*, C^*, R^*) \) and is otherwise unstable.
\end{theorem}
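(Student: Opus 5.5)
Our plan follows the geometric approach of Li and Muldowney recalled in Lemma 2 and Lemma 3. Since $R$ does not enter the equations for $S$, $A$, $C$, we work with the three-dimensional subsystem $(S,A,C)$ on the projection of $\Omega$ and recover $R$ afterwards from $\dot R=\delta S+\frac{ruA}{1+buA}+\theta C-\mu R$, which is asymptotically autonomous and tends to $R^*$.

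\textbf{Step 1: the hypotheses of Lemma 3.} For $\mathcal{R}_0>1$ we argue as follows.
\begin{itemize}
\item The DFE is unstable: by Theorem 1, $q<0$, so the reduced Jacobian has a positive eigenvalue.
\item This instability, together with the boundedness of $\Omega$ shown in the positivity section, gives uniform persistence (Freedman--Ruan--Tang). Hence a compact absorbing set $K$ exists in the interior $D$.
\item Uniqueness of $E^*$ is taken from the isocline argument under condition (11).
\item Local asymptotic stability of $E^*$ is taken from Theorem 2.
\end{itemize}

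\textbf{Step 2: the compound matrix and $B$.} We write the Jacobian $J$ of the $(S,A,C)$ subsystem, using the entries already displayed in the proof of Theorem 2: $-l_1$, $-2\beta P_1$, $-l_2$, and so on, but at a general point. From it we form the second additive compound $J^{[2]}$, which is a $3\times 3$ matrix with diagonal entries $J_{11}+J_{22}$, $J_{11}+J_{33}$, $J_{22}+J_{33}$. We choose $M=\mathrm{diag}(1,A/C,A/C)$, the standard choice for this kind of $S$--infected--infected structure. This gives $M_fM^{-1}=\mathrm{diag}(0,\dot A/A-\dot C/C,\dot A/A-\dot C/C)$ and
\[
B=M_fM^{-1}+MJ^{[2]}M^{-1}=\begin{pmatrix}B_{11}&B_{12}\\ B_{21}&B_{22}\end{pmatrix},
\]
with $B_{11}$ a scalar and $B_{22}$ a $2\times 2$ block.

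\textbf{Step 3: the Lozinski\u{\i} measure.} We bound
\[
\mu(B)\le\max\{g_1,g_2\},\qquad g_1=B_{11}+|B_{12}|,\quad g_2=|B_{21}|+\mu_1(B_{22}),
\]
where we use the vector norm $|(u,v,w)|=\max\{|u|,|v|+|w|\}$. We then rewrite the diagonal terms through the $A$ and $C$ equations:
\[
\frac{\dot A}{A}=\frac{2\beta S}{A+S}+\frac{2\beta\gamma CS}{A(C+S)}-\frac{ru}{1+buA}-(\sigma+\mu),\qquad \frac{\dot C}{C}=\frac{\sigma A}{C}-q_2 .
\]
The goal is to show $g_1\le \dot A/A-\mu$ and $g_2\le \dot A/A-\mu$. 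These give $\bar q\le\limsup\frac1t\int_0^t\bigl(\dot A/A-\mu\bigr)\,ds=-\mu<0$, because $\frac1t\ln A(t)\to0$ on $K$ by persistence and boundedness.

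\textbf{Main obstacle.} The hard part is the sign bookkeeping in Step 3. The harmonic-mean partial derivatives such as $2\beta(S/(A+S))^2$ and $2\beta\gamma(S/(C+S))^2$ are bounded by $2\beta$ and $2\beta\gamma$. However, the off-diagonal terms $|B_{12}|$ and $|B_{21}|$ carry the $\alpha\lambda$ vertical-transmission term and the coupling $2\beta\gamma P_2 C/A$, and these need not be absorbed by the negative diagonal contributions $-(\delta+\mu)$ and $-q_2$. We would first try to close the estimate directly on $K$ using $S/(A+S)\le1$, $S/(C+S)\le1$ and the lower bounds from persistence. If that fails, we expect to need an explicit parameter condition, for instance $\alpha\lambda+2\beta\gamma<\delta+\mu$ together with $\sigma\le q_2$, stated as an additional sufficient hypothesis, in the same spirit as the conditions appended to Theorem 2. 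The ``otherwise unstable'' clause then follows because for $\mathcal{R}_0<1$ the endemic equilibrium $E^*$ does not exist in $\Omega$ and Theorem 3 applies.
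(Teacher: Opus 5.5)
The gap is Step~3. You leave it open, and it cannot be closed with your choice $M=\mathrm{diag}(1,A/C,A/C)$ and norm $\max\{|u|,|v|+|w|\}$ under $\mathcal{R}_0>1$ alone. Take the $(S,A,C)$ Jacobian at a general point, using the paper's $l_1,l_2,l_3,P_1,P_2$, so that $J_{12}=-2\beta P_1$, $J_{13}=-l_2$ and $J_{21}=l_1-(\delta+\mu)\ge 0$. Use $\dot C/C=\sigma A/C-q_2$ to cancel $|B_{21}|=\sigma A/C$. Your bookkeeping then gives exactly
\[
g_1=-l_1+l_3+\frac{C}{A}\,l_2,\qquad g_2=\frac{\dot A}{A}+\max\Bigl\{-(\delta+\mu),\;4\beta P_1-\frac{ru}{(1+buA)^2}-(\sigma+\mu)\Bigr\}.
\]

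For $g_2$: you need $g_2\le\dot A/A-\mu$, i.e.\ $4\beta P_1\le\sigma+ru/(1+buA)^2$ on $K$. The harmonic-mean derivative $2\beta P_1$ enters twice, once from the diagonal and once from $|b_{23}|$. Since $\mathcal{R}_0>1$ is equivalent to $2\beta(1+\gamma\sigma/q_2)>ru+\sigma+\mu$, the second entry of the max is at least $(ru+\sigma+\mu)\bigl(\frac{2P_1}{1+\gamma\sigma/q_2}-1\bigr)$. This is positive at a low-prevalence $E^*$ whenever $\gamma\sigma<q_2$.

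For $g_1$: subtracting $\dot A/A$ leaves two positive remainders, $\frac{rbu^2A}{(1+buA)^2}$ from saturated treatment and $\alpha\lambda\frac{C}{A}$ from vertical transmission, against essentially only $l_1$. At $E^*$ you have $C^*/A^*=\sigma/q_2$ and $\dot A=0$, so
\[
g_1(E^*)\ \ge\ \frac{\sigma}{q_2}\Bigl(\alpha\lambda-\frac{\beta\gamma}{2}\Bigr)-l_1^*-\frac{\beta}{2}.
\]
For the paper's own endemic Set~B ($\mathcal{R}_0\approx 1.05$) this is at least $0.77$. Your bound is therefore positive along the constant orbit $x\equiv E^*$ and cannot yield $\bar q<0$.

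Your fallback hypotheses do not repair this. The condition $\sigma\le q_2$ controls $C/A$ only at the equilibrium, not along orbits in $K$. Nothing handles the saturation remainder or the $4\beta P_1$ term. And $\sigma\le q_2$ fails for both of the paper's parameter sets ($\sigma=0.33$ against $q_2\approx 0.017$ and $0.007$). What you have is a conditional result whose hypotheses are still to be found, not a proof of the statement.

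Your Step~1 inputs are conditional too:
\begin{itemize}
\item Positive invariance of $\Omega$, on which your absorbing set rests, needs $\alpha\lambda\le\mu$, because $\dot S=\lambda(1-\alpha C)$ on $\{S=0\}$. Set~B violates this.
\item Uniqueness of $E^*$ rests on condition (11).
\item Theorem~2 carries extra conditions ($Z_1,Z_2>0$, $\mathcal{R}_{0C}>1$), and you do not need it anyway, since Lemma~3 yields stability.
\item The ``otherwise'' clause needs non-existence of $E^*$ for $\mathcal{R}_0<1$. The paper does not establish this, and saturated treatment can destroy it via backward bifurcation. Indeed the $\hat H\ge 0$ step of Theorem~3 fails: the treatment contributes $ruA\bigl(\frac{1}{1+buA}-1\bigr)<0$, which the incidence terms do not dominate near the DFE.
\end{itemize}

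Nor can you borrow the missing estimate from the paper. Its proof takes a different route: a third rather than second additive compound of the full $4\times 4$ Jacobian, with entries frozen at $E^*$ and $Q=\mathrm{diag}(S,A,C,R)$. It reaches $h_i\le\dot X/X-\mathrm{const}$ only by discarding positive entries such as $2\beta P_1$, $\frac{A^*}{S^*}\theta$ and $\frac{C^*}{A^*}\sigma$. Your three-dimensional second-compound set-up is the right Li--Muldowney framework, but neither argument supplies the Lozinski-measure bound it requires.
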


\begin{proof} Jacobian matrix \( J \) of the model is:

\[
J = \begin{bmatrix}
a_{11} & a_{12} & a_{13} & a_{14} \\
a_{21} & a_{22} & a_{23} & a_{24} \\
a_{31} & a_{32} & a_{33} & a_{34} \\
a_{41} & a_{42} & a_{43} & a_{44} \\
\end{bmatrix}
\]

The third additive compound matrix is expressed as follows:

\[
J^{[3]} = \begin{bmatrix}
a_{11} + a_{22} + a_{33} & a_{34} & -a_{24} & a_{14} \\
a_{43} & a_{11} + a_{22} + a_{44} & a_{23} & -a_{13} \\
-a_{42} & a_{32} & a_{11} + a_{33} + a_{44} & a_{12} \\
-a_{41} & -a_{31} & a_{21} & a_{22} + a_{33} + a_{44} \\
\end{bmatrix}
\]

where the Jacobian of the system is:

\[
\hspace{-2cm}
J = 
\begin{bmatrix}
- \left(2 \beta \left( \frac{A^*}{A^* + S^*} \right)^2 + 2 \beta \gamma \left( \frac{C^*}{C^* + S^*} \right)^2 + q_3 \right) & -2 \beta \left( \frac{S^*}{A^* + S^*} \right)^2 & - \left( 2 \beta \gamma \left( \frac{S^*}{C^* + S^*} \right)^2 + \alpha \lambda \right) & 0 \\ 
2 \beta \left( \frac{A^*}{A^* + S^*} \right)^2 + 2 \beta \gamma \left( \frac{C^*}{C^* + S^*} \right)^2 & 2 \beta \left( \frac{S^*}{A^* + S^*} \right)^2 - \frac{r u}{(1 + b u A^*)^2} - q_4 & 2 \gamma \left( \frac{S^*}{C^* + S^*} \right)^2 & 0 \\ 
0 & \sigma & - q_2 & 0 \\ 
\delta & \frac{r u}{(1 + b u A^*)^2} & \theta & -\mu \\ 
\end{bmatrix}
\]

Corresponding additive compound matrix \( J^{[3]} \) can be written as:

\[
J^{[3]} = \begin{bmatrix}
B_{11} & 0 & 0 & 0 \\
\theta & B_{22} & 2 \gamma \left( \frac{S^*}{S^* + C^*} \right)^2 & \left( 2 \beta \gamma \left( \frac{S^*}{C^* + S^*} \right)^2 + \alpha \lambda \right)  \\
- \frac{r u}{(1 + b u A^*)^2} & \sigma & B_{33} & -2\beta \left( \frac{S^*}{S^* + A^*} \right)^2 \\
\delta & 0 & 2 \beta \left( \frac{A^*}{A^* + S^*} \right)^2 + 2\beta \gamma \left( \frac{C^*}{C^* + S^*} \right)^2 & B_{44} \\
\end{bmatrix}
\]
where,
\begin{align*}
B_{11} & = -(a_{11} + a_{22} + a_{33}) \\
      & = - \left\{ 2 \beta \left( \frac{A^*}{A^* + S^*} \right)^2 + 2 \beta \gamma \left( \frac{C^*}{C^* + S^*} \right)^2 - 2 \beta \left( \frac{S^*}{S^* + A^*} \right)^2 + \frac{r u}{(1 + b u A^*)^2} + q_2 + q_3 + q_4 \right\} \\
B_{22} & = -(a_{11} + a_{22} + a_{44}) \\
      & = - \left\{ 2 \beta \left( \frac{A^*}{A^* + S^*} \right)^2 + 2 \beta \gamma \left( \frac{C^*}{C^* + S^*} \right)^2 + \frac{r u}{(1 + b u A^*)^2} - 2 \beta \left( \frac{S^*}{A^* + S^*} \right)^2 + q_3 + q_4 + \mu \right\} \\
B_{33} & = -(a_{11} + a_{33} + a_{44}) \\
      & = - \left\{ 2 \beta \left( \frac{A^*}{A^* + S^*} \right)^2 + 2 \beta \gamma \left( \frac{C^*}{C^* + S^*} \right)^2 + q_2 + q_3 + \mu \right\} \\
B_{44} & = -(a_{22} + a_{33} + a_{44}) \\
      & = - \left\{- 2 \beta \left( \frac{S^*}{S^* + A^*} \right)^2 + \frac{r u}{(1 + b u A^*)^2} + q_2 + q_4 + \mu \right\}
\end{align*}

We assume the function \( Q = \text{diag}(S, A, C, R) \) then \( Q^{-1} = \left[ \frac{1}{S} \; \frac{1}{A} \; \frac{1}{C} \; \frac{1}{R} \right] \) and derivative of \( Q \) w.r.t time yields:

\[
Q_f = \text{diag}\left( \dot{S}, \dot{A}, \dot{C}, \dot{R} \right)
\]

\[
Q_f Q^{-1} = \text{diag} \left( \frac{\dot{S}}{S}, \frac{\dot{A}}{A}, \frac{\dot{C}}{C}, \frac{\dot{R}}{R} \right) \quad 
\]
\text{and}
\[
Q J^{[3]} Q^{-1} = \begin{bmatrix}
B_{11} & 0 & 0 & 0 \\
\frac{A^*}{S^*} \theta & B_{22} & 2 \gamma \frac{A^*}{C^*} \left( \frac{S^*}{S^* + C^*} \right)^2 & \frac{A^*}{R^*} \left( 2 \beta \gamma \left( \frac{S^*}{C^* + S^*} \right)^2 + \alpha \lambda \right)  \\
- \frac{C^*}{S^*} \frac{r u}{(1 + b u A^*)^2} & \frac{C^*}{A^*} \sigma & B_{33} & -2 \beta \frac{C^*}{R^*} \left( \frac{S^*}{S^* + A^*} \right)^2 \\
\frac{R^*}{S^*} \delta& 0 & \ 2 \beta \frac{R^*}{C^*} \left(\left( \frac{A^*}{A^* + S^*} \right)^2 +  \gamma \left( \frac{C^*}{C^* + S^*} \right)^2 \right) & B_{44} \\
\end{bmatrix}
\]

Now, a direct computation shows that \(
B = Q_f Q^{-1} + Q_f J^{[3]} Q^{-1} \), which becomes

\[
B = \begin{bmatrix}
b_{11} & 0 & 0 & 0 \\
b_{21} & b_{22} & b_{23} & b_{24} \\
b_{31} & b_{32} & b_{33} & b_{34} \\
b_{41} & 0 & b_{43} & b_{44} \\
\end{bmatrix}
\]
where,

\begin{align*}
b_{11} &= \frac{\dot{S}}{S} - \left[ 2\beta \left( \frac{A^*}{A^* + S^*} \right)^2 + 2\beta \gamma \left( \frac{C^*}{C^* + S^*} \right)^2 + \frac{r u}{(1 + b u A^*)^2}- 2\beta \left( \frac{S^*}{S^* + A^*} \right)^2 + (q_2 + q_3 + q_4) \right] \\ b_{21} &= \frac{A^*}{S^*} \theta, \\ \quad b_{22} &= \frac{\dot{A}}{A} - \left[ 2\beta \left( \frac{A^*}{A^* + S^*} \right)^2 + 2\beta \gamma \left( \frac{C^*}{C^* + S^*} \right)^2 - 2\beta \left( \frac{S^*}{S^* + A^*} \right)^2 + \frac{r u}{(1 + b u A^*)^2} + q_2 + q_4 + \mu \right], \\ b_{23} &= 2\gamma \frac{A^*}{C^*} \left( \frac{S^*}{S^* + C^*} \right)^2, \\ \quad b_{24} &= \frac{A^*}{R^*} \left( \alpha \lambda + 2\beta \left( \frac{S^*}{S^* + A^*} \right)^2 \right), \\ b_{31} &= - \frac{C^*}{S^*} \frac{r u}{(1 + b u A^*)^2} , \\ \quad b_{32} &= \frac{C^*}{A^*} \sigma, \\ b_{33} &= \frac{\dot{C}}{C} - \left[ 2\beta \left( \frac{A^*}{A^* + S^*} \right)^2 + 2\beta \gamma \left( \frac{C^*}{C^* + S^*} \right)^2 + q_2 + q_3 + \mu \right], \\ \quad b_{34} &= -2\beta \frac{C^*}{R^*} \left( \frac{S^*}{S^* + A^*} \right)^2, \\ b_{41} &= \frac{\dot{R}}{R} - \left[ -2\beta \left( \frac{S^*}{S^* + A^*} \right)^2 + \frac{r u}{(1 + b u A^*)^2} + q_2 + q_4 +\mu \right] 
\end{align*}   

Consequently,
\begin{align*}
\begin{cases}
h_1(t) &= b_{11} + \sum_{i=2}^{4} |b_{1i}| \\
h_1(t) &= b_{11} + |b_{12}| + |b_{13}| + |b_{14}| \\
       &= \frac{\dot{S}}{S} - \Bigg[ 
           2\beta \left( \frac{A^*}{A^* + S^*} \right)^2 
           + 2\beta \gamma \left( \frac{C^*}{C^* + S^*} \right)^2 
           + \frac{r u}{(1 + b u A^*)^2} + q_2 + q_3 + q_4 \\
       &\qquad - 2\beta \left( \frac{S^*}{S^* + A^*} \right)^2 
           \Bigg] \\
       &\leq \frac{\dot{S}}{S} - (q_2 + q_3 + \mu),  
\end{cases}       
\end{align*}

\begin{align*}
\begin{cases}
h_2(t) &= b_{22} + \sum_{i=1, i \neq 2}^{4} |b_{2i}| \\
h_2(t) &= b_{22} + |b_{21}| + |b_{23}| + |b_{24}| \\
       &= \frac{\dot{A}}{A} - \Bigg[ 
           2\beta \left( \ frac{A^*}{A^* + S^*} \right)^2 
           + 2\beta \gamma \left( \frac{C^*}{C^* + S^*} \right)^2 
           - 2\beta \left( \frac{S^*}{S^* + A^*} \right)^2 \\
       &\qquad + \frac{r u}{(1 + b u A^*)^2} + q_3 + q_4 + \mu 
           \Bigg] \\
       &\quad + \frac{A^*}{S^*} \theta 
           + 2 \gamma \frac{A^*}{C^*} \left( \frac{S^*}{C^* + S^*} \right)^2 
           + \frac{A^*}{R^*} \left( \alpha \lambda 
           + 2 \beta \gamma \left( \frac{S^*}{C^* + S^*} \right)^2 \right) \\
       &\leq \frac{\dot{A}}{A} - (q_2 + q_3 + \mu),
\end{cases}
\end{align*}

\begin{align*}
\begin{cases}
h_3(t) &= b_{33} + \sum_{i=1, i \neq 3}^{4} |b_{3i}| \\
h_3(t )&= b_{33} + |b_{31}| + |b_{32}| + |b_{34}| \\
       &= \frac{\dot{C}}{C} - \Bigg[ 
           2\beta \left( \frac{A^*}{A^* + S^*} \right)^2 
           + 2\beta \gamma \left( \frac{C^*}{C^* + S^*} \right)^2 
           + q_2 + q_3 + \mu 
           \Bigg] \\
       &\quad + \frac{C^*}{S^*} \frac{r u}{(1 + b u A^*)^2} 
           + \frac{C^*}{A^*} \sigma 
           + 2\beta \frac{C^*}{R^*} \left( \frac{S^*}{S^* + A^*} \right)^2 \\
       &\leq \frac{\dot{C}}{C} - (q_2 + q_3 + \mu),
\end{cases}       
\end{align*}

similarly,
\begin{align*}
\begin{cases}
h_4(t) &= b_{44} + \sum_{i=1}^{3} |b_{4i}| \\
h_4(t) &= b_{44} + |b_{41}| + |b_{42}| + |b_{43}| \\
       &= \frac{\dot{R}}{R} - \Bigg[ 
           -2\beta \left( \frac{S^*}{S^* + A^*} \right)^2 
           + \frac{r u}{(1 + b u A^*)^2} + q_2 + q_4 + \mu 
           \Bigg] \\
       &\quad + \frac{R^*}{S^*} \delta 
           + 2 \beta \frac{R^*}{C^*} \Bigg[ 
           \left( \frac{A^*}{A^* + S^*} \right)^2 
           + \gamma \left( \frac{C^*}{C^* + S^*} \right)^2 
           \Bigg] \\
       &\leq \frac{\dot{R}}{R} - (q_1 + q_4 + \mu).
\end{cases}       
\end{align*}

Let \( (b_1, b_2, b_3, b_4) \) be a vector in \( \mathbb{R}^4 \). The Lozinski measure \( \mu(B) \) is defined as \( \mu(B) = h_i \), \( i = 1, 2, 3, 4 \). Upon the integration of the Lozinski measure \( \mu(B) \), we apply limits as \( t \to \infty \), yielding the following sets of equations:

\begin{align*}
    \limsup_{t \to \infty} sup{\frac{1}{t}} \int_{0}^{t} h_1(t) dt & \leq \frac{1}{t} log \frac{S(t)}{S(0)} - (q_2 + q_3 + q_4) \\
    & < - (q_2 + q_3 + q_4),
\end{align*}

\begin{align*}
    \limsup_{t \to \infty} sup \frac{1}{t}\int_0^t h_2(t) dt & \leq \frac{1}{t} log\frac{A(t)}{A(0)} - (q_3 + q_4 + \mu) \\ 
    & < -(q_3 + q_4 + \mu), 
\end{align*}

\begin{align*}
    \limsup_{t \to \infty} sup{\frac{1}{t}} \int_{0}^{t} h_3(t) dt & \leq \frac{1}{t} log \frac{C(t)}{C(0)} - (q_2 + q_3 + \mu) \\
    & < - (q_2 + q_3 + \mu),
\end{align*}

\begin{align*}
    \limsup_{t \to \infty} sup \frac{1}{t}\int_0^t h_4(t) dt & \leq \frac{1}{t} log\frac{R(t)}{R(0)} - (q_2 + q_4 + \mu) \\ 
    & < -(q_2 + q_4 + \mu).
\end{align*}

From the above discussions, we can conclude that 
\begin{align*}
     \bar{q} = \limsup_{t \to \infty} sup \frac{1}{t}\int_0^t \mu(B) dt < 0.
\end{align*}
Thus, the endemic equilibrium point (EE), $E^*$, is deemed to be globally asymptotically stable.
\end{proof}


\section{Optimal Control Problem}
Optimal control techniques offer powerful mathematical tools for designing strategies to control the spread of infectious diseases, such as HBV (Hepatitis B Virus), within a community. In this study, we employ optimal control theory \cite{beay2022dynamical,ghosh2019qualitative,GUMEL2003409,khan2017transmission,zaman2008stability} to develop an effective strategy for reducing HBV infection rates across the population. Our primary objectives are to augment the count of recovered persons, \( R(t) \), while concurrently diminishing the susceptible population \( S(t) \) and the individuals afflicted with acute \( A(t) \) and chronic \( C(t) \) hepatitis B. To achieve these objectives, we utilize time-dependent control variables: vaccination \( u_1(t) \) for susceptible individuals and treatment \( u_2(t) \) for the infected population. This can be formalized with a cost functional that penalizes infection prevalence and intervention costs, optimized over time to determine the most effective deployment of resources.

The objective function is given by:

\[
J(u_1, u_2) = \int_{0}^{T} \{k_1 S + k_2 C + \frac{1}{2}(w_1 u_1^2 + w_2 u_2^2)\} dt
\]
subject to the control system:

\begin{align*}
\frac{dS}{dt} &= \lambda(1-\alpha C) - \beta \frac{2AS}{A+S} - \beta \gamma \frac{2CS}{C+S} - (u_1 + \mu)S \\
\frac{dA}{dt} &= \beta \frac{2AS}{A+S} + \beta \gamma \frac{2CS}{C+S} - \frac{ruA}{1+buA} - (\sigma + \mu)A \\
\frac{dC}{dt} &= \sigma A + (\alpha \lambda - u_2 - \tau - \mu)C \\
\frac{dR}{dt} &= u_1 S + \frac{ruA}{1+buA} + u_2 C - \mu R
\end{align*}
\text{The initial conditions are:}
\begin{equation*}
0 \leq S(0), \quad 0 \leq A(0), \quad 0 \leq C(0), \quad 0 \leq R(0).
\end{equation*}

In the objective functional, \(k_1\) and \(k_2\) represent the weight constants for the susceptible and chronic populations, respectively; \(w_1\) and \(w_2\) are the weight constants for vaccination and treatment, respectively.

\vspace{0.5cm}

\textbf{Goal}: To obtain an optimal pair $(u_1^*, u_2^*)$ such that $J(u_1^*, u_2^*) = \min J(u_1, u_2)$ where

\[
\mathcal{U} = \{(u_1, u_2) \in L^1(0, T) | (u_1(t), u_2(t)) \in [0, u_{1max}] \times [0, u_{2max}], \forall t \in [0, T] \}
\]

\vspace{0.5cm}
Using the control variables, Pontryagin's maximum principle \cite{zaman2008stability} transforms the system into an optimal control problem by introducing the following Hamiltonian function to be minimized:

\[
H(t, x, U, \Lambda) = f(t, x, U) + \Lambda g(t, x, U),
\]
which is equal to

\begin{align*}
H &= k_1 S + k_2 C + \frac{1}{2} w_1 u_1^2 + \frac{1}{2} w_2 u_2^2 + \Lambda_1 \left[ \lambda(1 - \alpha C ) -  \beta \frac{2AS}{A + S} -  \beta \gamma\frac{2CS}{C + S} - (u_1 + \mu) S \right] \\
&\quad + \Lambda_2 \left[ \beta\frac{2AS}{A + S} + \beta \gamma\frac{2CS}{C + S} - \frac{r u A}{1 + b u A} - (\sigma + \mu) A \right] \\
&\quad + \Lambda_3 \left[ \sigma A + (\alpha \lambda - u_2 - \tau - \mu) C \right] \\
&\quad + \Lambda_4 \left[u_1 S + \frac{r u A}{1 + b u A} + u_2 C - \mu R \right],
\end{align*}

where \( \Lambda_i, i = 1,2,3,4 \) are the Lagrangian multipliers of the optimization problem. It should satisfy Pontryagin's Maximum Principle \cite{zaman2008stability} as follows:
\[
\frac{d\Lambda_1}{dt} = -\frac{\partial H}{\partial S}, \quad \frac{d\Lambda_2}{dt} = -\frac{\partial H}{\partial A}, \quad \frac{d\Lambda_3}{dt} = -\frac{\partial H}{\partial C}, \quad \frac{d\Lambda_4}{dt} = -\frac{\partial H}{\partial R}
\]
with the transversality condition
\[
\Lambda_1(T) = 0, \quad \Lambda_2(T) = 0, \quad \Lambda_3(T) = 0, \quad \Lambda_4(T) = 0.
\]

The optimal conditions(co-state(adjoint) equations) are given by:
\[
\frac{\partial H}{\partial u_1} = 0, \quad \frac{\partial H}{\partial u_2} = 0 \quad.
\]
Now, the co-state equations are:
\begin{align*}
\frac{d\Lambda_1}{dt} &= \Lambda_1 \left(2 \beta \left( \frac{A}{A+S} \right)^2 + 2 \beta \gamma \left( \frac{C}{C+S} \right)^2 - (u_1 + \mu) \right) \\
&\quad - \Lambda_2 \left( 2 \beta \left( \frac{A}{A+S} \right)^2 + 2 \beta \gamma  \left( \frac{C}{C+S} \right)^2 \right) - \Lambda_4 u_1 - k_1, \\[10pt]
\frac{d\Lambda_2}{dt} &= \Lambda_1 2 \beta \left(  \frac{S}{A+S} \right)^2 + \Lambda_2 \left(\frac{r u}{(1 + b u A)^2} + (\sigma + \mu) -2 \beta \left(\frac{S}{A+S} \right)^2 \right) - \Lambda_3 \sigma - \Lambda_4 \frac{r u}{(1 + b u A)^2}, \\[10pt]
\frac{d\Lambda_3}{dt} &= \Lambda_1 \left(\alpha \lambda + 2 \beta \gamma \left(\frac{S}{C+S} \right)^2 \right) - \Lambda_2 2\beta \gamma \left(\frac{S}{C+S} \right)^2 +\Lambda_3 (u_2 +\tau +\mu - \alpha \lambda) - \Lambda_4 u_2 - k_2,  \\
&\quad - \Lambda_4 u_2 - k_2, \\[10pt]
\frac{d\Lambda_4}{dt} &= \Lambda_4 \mu
\end{align*}

Again, we write our Hamiltonian function as
\[
H = \frac{1}{2} w_1 u_1^2 + \frac{1}{2} w_2 u_2^2 - \Lambda_1 u_1 S - \Lambda_3 u_2 C + \Lambda_4 u_1 S + \Lambda_4 u_2 C + \text{terms free from } u_1, u_2
\]

Utilizing the optimality conditions \( \frac{\partial H}{\partial u_1} = 0 \), \( \frac{\partial H}{\partial u_2} = 0 \), we get:
\[
w_1 u_1 - \Lambda_1 S + \Lambda_4 S = 0
\]
and
\[
w_2 u_2 - \Lambda_3 C + \Lambda_4 C = 0
\]

which implies that
\[u_1 = \frac{(\Lambda_1 - \Lambda_4) S}{w_1}\]
and \quad
\[u_2 = \frac{(\Lambda_3 - \Lambda_4) C}{w_2}\]

So, the optimal controls required can be written in terms of the maximum and minimum values:
\[
u_1^* = \max \left\{ \min \left( \frac{1}{w_1} S(\Lambda_1 - \Lambda_2), 0 \right), 1 \right\}
\]
and
\[ u_2^* = \max \left\{ \min \left( \frac{1}{w_2} C(\Lambda_3 - \Lambda_4), 0 \right), 1 \right\}. \]

\section{Numerical Simulation for the Mathematical Model}
\subsection{Global Sensitivity Analysis}
Global Sensitivity Analysis (GSA) assesses how variations in model inputs influence model outputs throughout the entire range of possible input values. In epidemiological models, GSA helps identify the parameters that most significantly affect disease dynamics, such as transmission rates, recovery rates, and initial population sizes. This understanding is crucial for model calibration, interpretation of uncertainties, and prioritizing data collection. GSA provides a robust analysis of the consistency of the behavior of the model across different parameter values, improving the reliability of the model and quantifying the contribution of parameter uncertainty to the overall uncertainty in model predictions. For this analysis, we applied Latin Hypercube Sampling (LHS) together with the Partial Rank Correlation Coefficient (PRCC). Latin Hypercube Sampling (LHS) is a statistical method for efficiently generating a sample of parameter values from a multidimensional parameter space \cite{alrabaiah2020optimal, beay2022dynamical}. LHS ensures that the entire range of each parameter is evenly sampled, thereby enhancing the sample's representativeness and improving efficiency over simple random sampling. The PRCC method is used to measure how strongly each parameter is associated with the model output, as well as the direction of that influence. A positive PRCC indicates a direct relationship between the parameter and the output, whereas a negative PRCC suggests an inverse relationship. A magnitude close to 1 indicates a strong relationship, while values near zero indicate little or no effect. PRCC is especially useful for nonlinear and nonmonotonic models, as it uses ranks rather than raw values, making it less sensitive to nonlinear relationships. PRCC values can be plotted over time to analyze how parameters influence changes across different stages of disease progression, providing insights into which parameters are most impactful at each stage. We employed the PRCC technique to demonstrate the strong impact of model parameters on the reproduction number, $\mathcal{R}_o$. The results of the sensitivity analysis are illustrated in Figures \ref{bar_prcc} and \ref{line_prcc}, and the corresponding PRCC values with their p-values are listed in Table \ref{tab_PRCC_table}. From Figures~\ref{bar_prcc} and \ref{line_prcc}, it is clear that the parameter $\beta$ has the strongest influence on the model output, as it shows the highest positive PRCC value. 

The parameters $\alpha$ and $\lambda$ also display notable positive correlations, indicating that increases in these parameters tend to raise $\mathcal{R}_0$. This implies that reducing these parameters through appropriate intervention strategies can help lower infection levels.

In contrast, the parameter $\tau$ has the most significant negative PRCC value, followed by $\mu$ and $\theta$. These negative correlations suggest that increasing these parameters contributes to a reduction in $\mathcal{R}_0$ and, therefore, can help suppress the spread of the infection.

\vspace{2.4 cm}

\begin{table}[h]
\caption{PRCC values and corresponding p-values for $\mathcal{R}_0$ with respect to the model parameters} \label{tab_PRCC_table}
\centering
\begin{tabular}{|l|l|l|} \hline
Parameter & PRCC Value & p-value \\ \hline\hline
$\beta$ & 0.3916 & 0.0001 \\ \hline
$\sigma$ & -0.0099 & 0.9223 \\ \hline
$\gamma$ & -0.0144 & 0.8867 \\ \hline
$\theta$ & -0.0472 & 0.6404 \\ \hline
$\tau$ & -0.8720 & 0.0000 \\ \hline
$\mu$ & -0.0577 & 0.5682 \\ \hline
$\alpha$ & 0.3528 & 0.0003 \\ \hline
$\lambda$ & 0.2574 & 0.0099 \\ \hline
$r$ & 0.1102 & 0.2745 \\ \hline
$u$ & -0.0253 & 0.8024 \\ \hline
\end{tabular}
\end{table}

\begin{figure}
 \centering
    \begin{subfigure}
        \centering
        \includegraphics[scale=1]{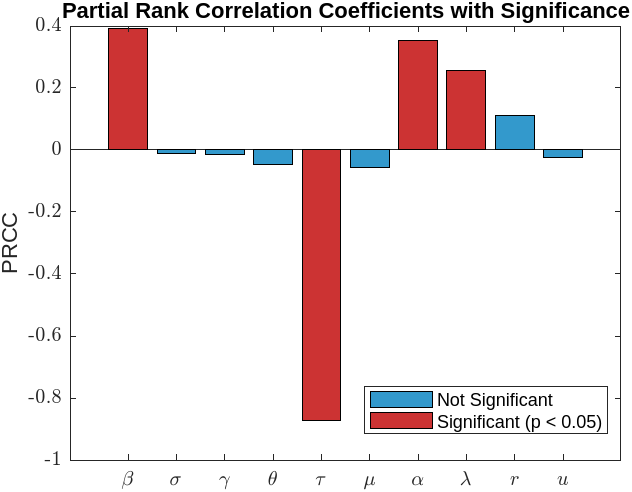}
        \caption{Bar plot showing the PRCC values of $\mathcal{R}_0$ for the model parameters used in the analysis}
        \label{bar_prcc}
    \end{subfigure}%
    
\vspace{0.5cm}    
    \begin{subfigure}
        \centering
        \includegraphics[scale=1]{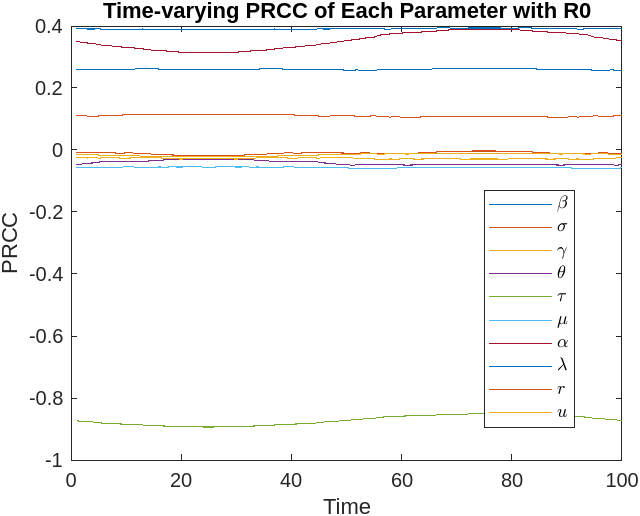}
        \caption{Time-varying PRCC of each parameters involved in $\mathcal{R}_0$}
        \label{line_prcc}
    \end{subfigure}%

\end{figure}


In light of the above findings, we have generated 2D contour \& 3D surface profiles of significant parameters against $\mathcal{R}_0$.

\begin{figure}[!h]
  \centering
  \begin{minipage}{.5\textwidth}
    \centering
    \includegraphics[width=\linewidth]{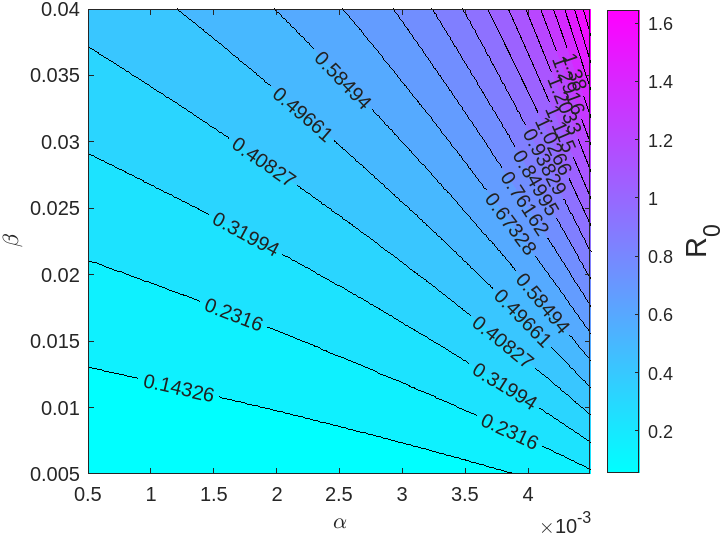}
    \includegraphics[width=\linewidth]{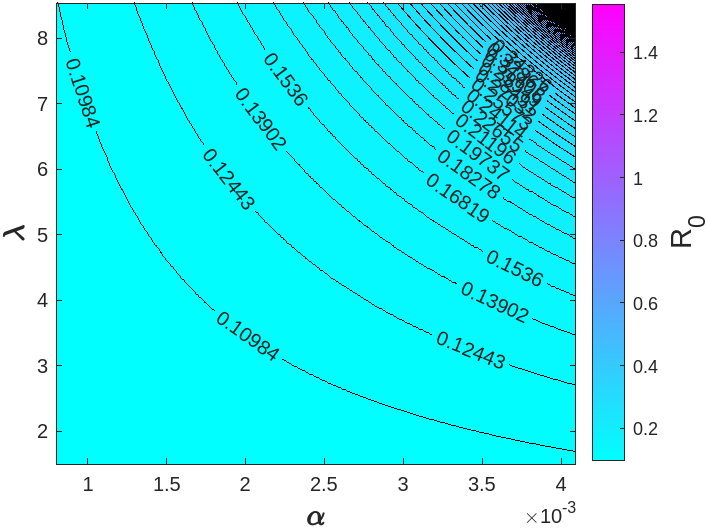}
    \includegraphics[width=\linewidth]{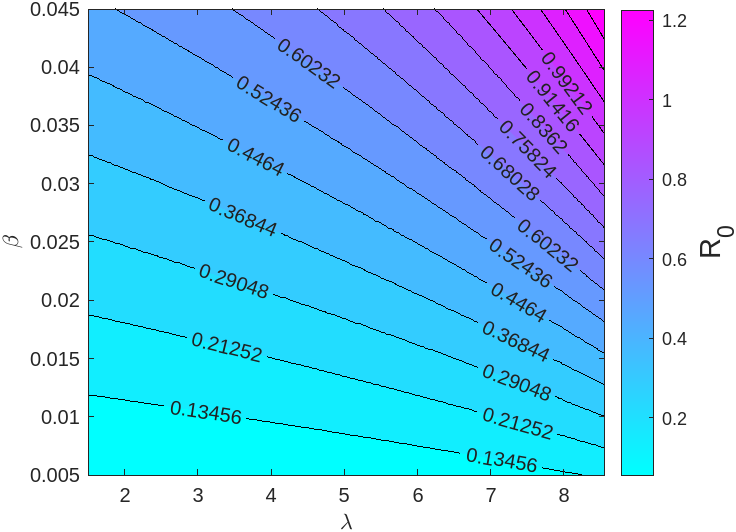}
  \end{minipage}%
  \begin{minipage}{.5\textwidth}
    \centering
    \includegraphics[width=\linewidth]{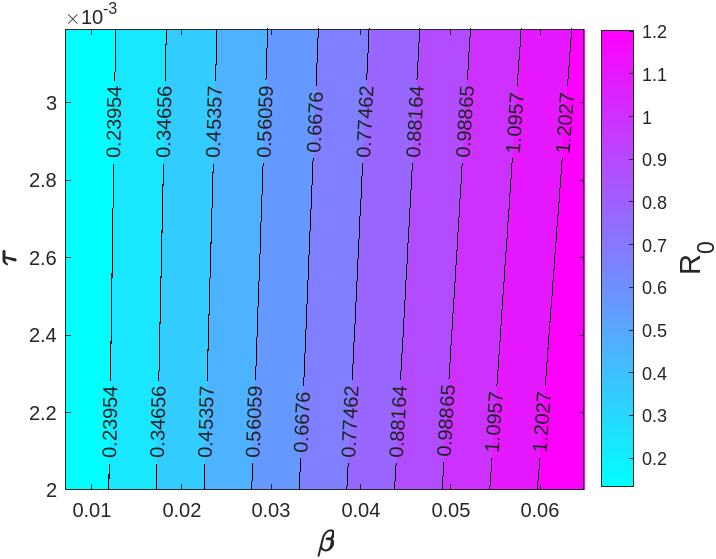}
    \includegraphics[width=\linewidth]{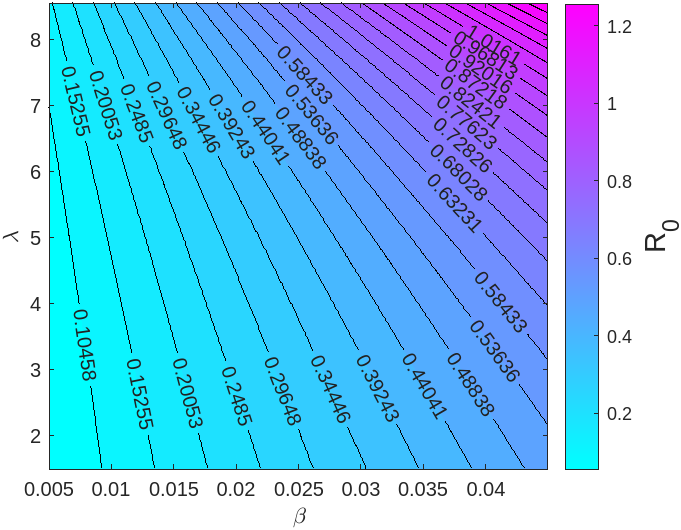}
    
  \end{minipage}
  \caption{2D contour profile of $R_{0}$ versus significant parameters $\alpha$, $\beta$, $\gamma$ and $\tau$.}
  \label{contour}
\end{figure}
			
\newpage		
\begin{figure}[!h]
  \centering
  \begin{minipage}{.5\textwidth}
    \centering
    \includegraphics[width=\linewidth]{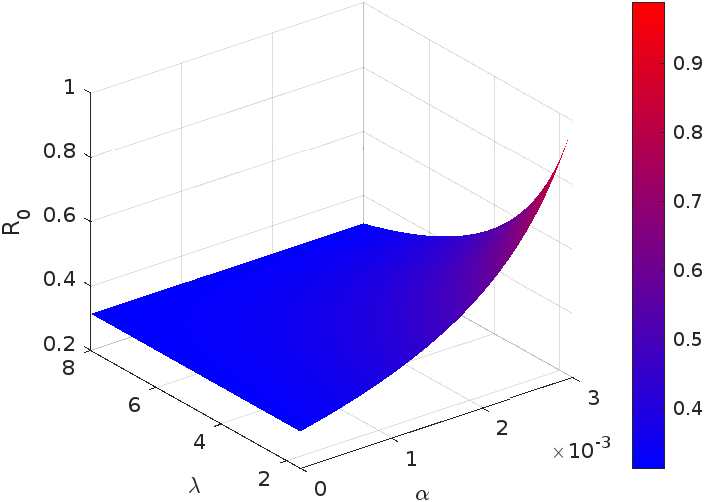}
    \includegraphics[width=\linewidth]{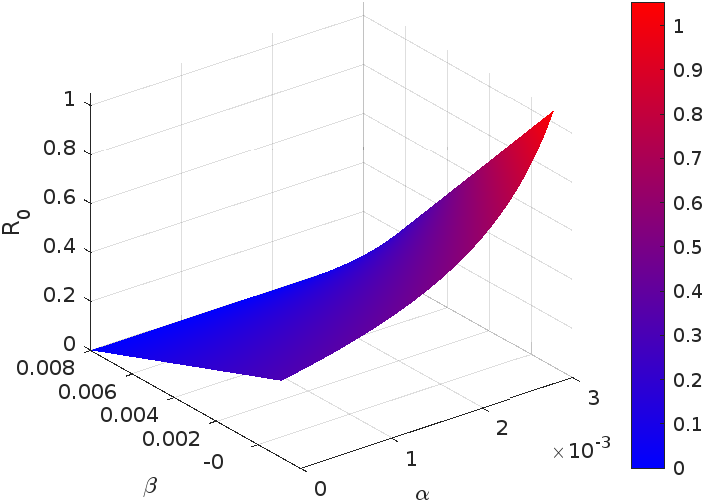}
    \includegraphics[width=\linewidth]{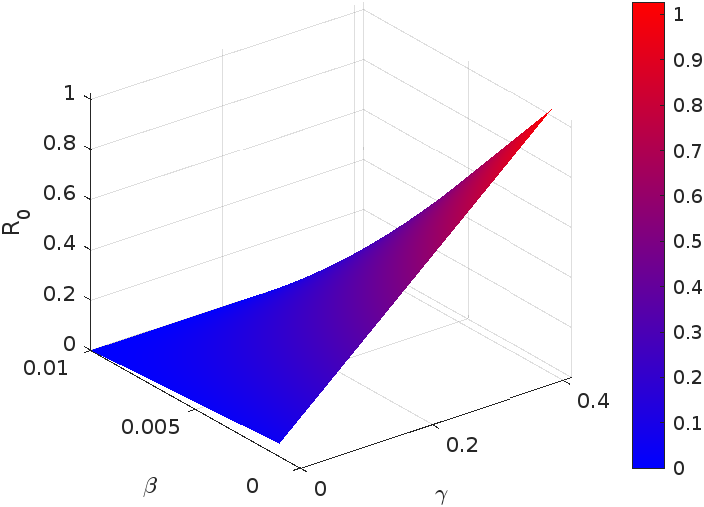}
  \end{minipage}%
  \begin{minipage}{.5\textwidth}
    \centering
    \includegraphics[width=\linewidth]{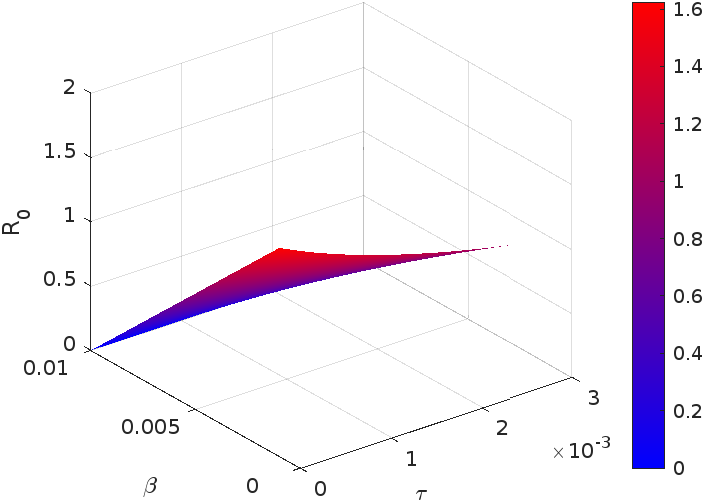}
    \includegraphics[width=\linewidth]{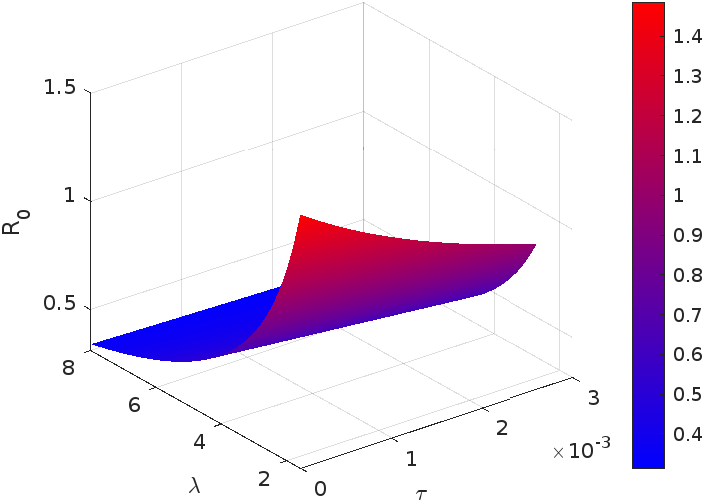}
  \end{minipage}
  \caption{3D surface profile of $R_{0}$ versus significant parameters $\alpha$, $\beta$, $\gamma$ and $\tau$.}
  \label{surface}
\end{figure}

To understand how Hepatitis B might either die out or remain a long-term issue in a population, we used two different sets of parameters—Set A and Set B. Set A, with parameter set \{$\lambda = 6.56$, $\alpha = 0.003$, $\beta = 0.0095$, $\gamma = 0.42$, $\delta = 0.02$, $\mu = 0.0221$, $\sigma = 0.33$, $\theta = 0.012$, $\tau = 0.0026$, r = 1.5, u = 0.39, b = 0.5\}, was chosen to represent conditions that could lead to the extinction of the disease. Meanwhile, Set B, with parameter values \{$\lambda = 6.56$, $\alpha = 0.003$, $\beta = 0.0091$, $\gamma = 0.42$, $\delta = 0.02$, $\mu = 0.0121$, $\sigma = 0.33$, $\theta = 0.012$, $\tau = 0.0026$, r = 0.09005, u = 0.19, b = 0.007\}, reflects scenarios where the disease is more likely to persist in the community.

Both sets of parameters were simulated with the same initial population states: \((S(0), A(0), C(0), R(0)) = (1000, 320, 50, 10)\), subjecting the process to run for 800 time units, using MATLAB's ode45 solver. The results are illustrated in Fig. \ref{fig_hbv_r0_01} and Fig. \ref{fig_R0 under time}, which describe the temporal dynamics of the susceptible, acute, chronic, and recovered populations. 

In Fig.\ref{fig_hbv_r0_01}, the blue trajectories corresponding to Set A (where $R_0 < 1$), illustrate a consistent reduction in the infection. With time passing, the acute and chronic compartments reach zero, with the susceptible and recovered populations dominating in the long term. This direction is suggestive of effective disease eradication under the given conditions. 

Conversely, the red trajectories, computed using Set B ($R_0 > 1$), illustrate recurrent levels of acute and chronic infection over the long term, suggestive of the repeated occurrence of the disease. Here, every compartment, i.e., recovered, chronic, acute, and susceptible, has nonzero equilibrium values, reflecting the long-term presence of infected cases in the population.

Together, these findings point to a foundational epidemiological principle: if $R_0 < 1$, the disease is fated for eventual eradication with a residual population of only recovered and susceptible members. If $R_0 > 1$, the infection will persist, leaving behind a residual burden on the population in the form of both acute and chronic diseases.

We also examined the time-dependent reproduction number, \( \mathcal{R}_0(t) \), for each intervention strategy; see Fig. \ref{fig_R0 under time}. When both vaccination and treatment controls were implemented in tandem, \( \mathcal{R}_0(t) \) remained below the critical threshold of one for a significant portion of the time horizon. This is a promising outcome, indicating that such a strategy could feasibly lead to long-term control or even elimination of the virus.

\begin{figure}[!htbp]
    \centering
    \includegraphics[width=7in]{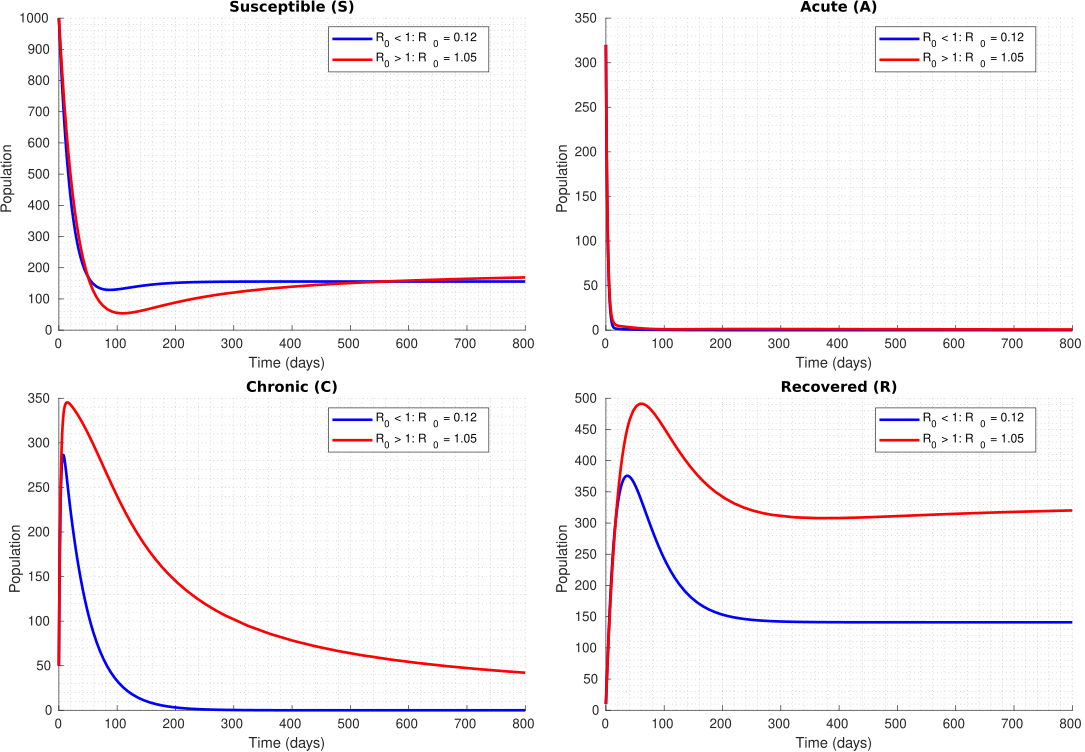}
    \caption{Comparison of the HBV transmission dynamics across all compartments under two scenarios: 
    \(\mathcal{R}_0 > 1\) (Uncontrolled) and \(\mathcal{R}_0 < 1\) (Controlled). 
    Shaded backgrounds indicate regions of high (red) and low (blue) transmission potential. Results suggest effective suppression of infection 
    when \(\mathcal{R}_0 < 1\) through targeted parameter control.}
    \label{fig_hbv_r0_01}
\end{figure}

\begin{figure}[!h]
    \centering
    \includegraphics[width=1\linewidth]{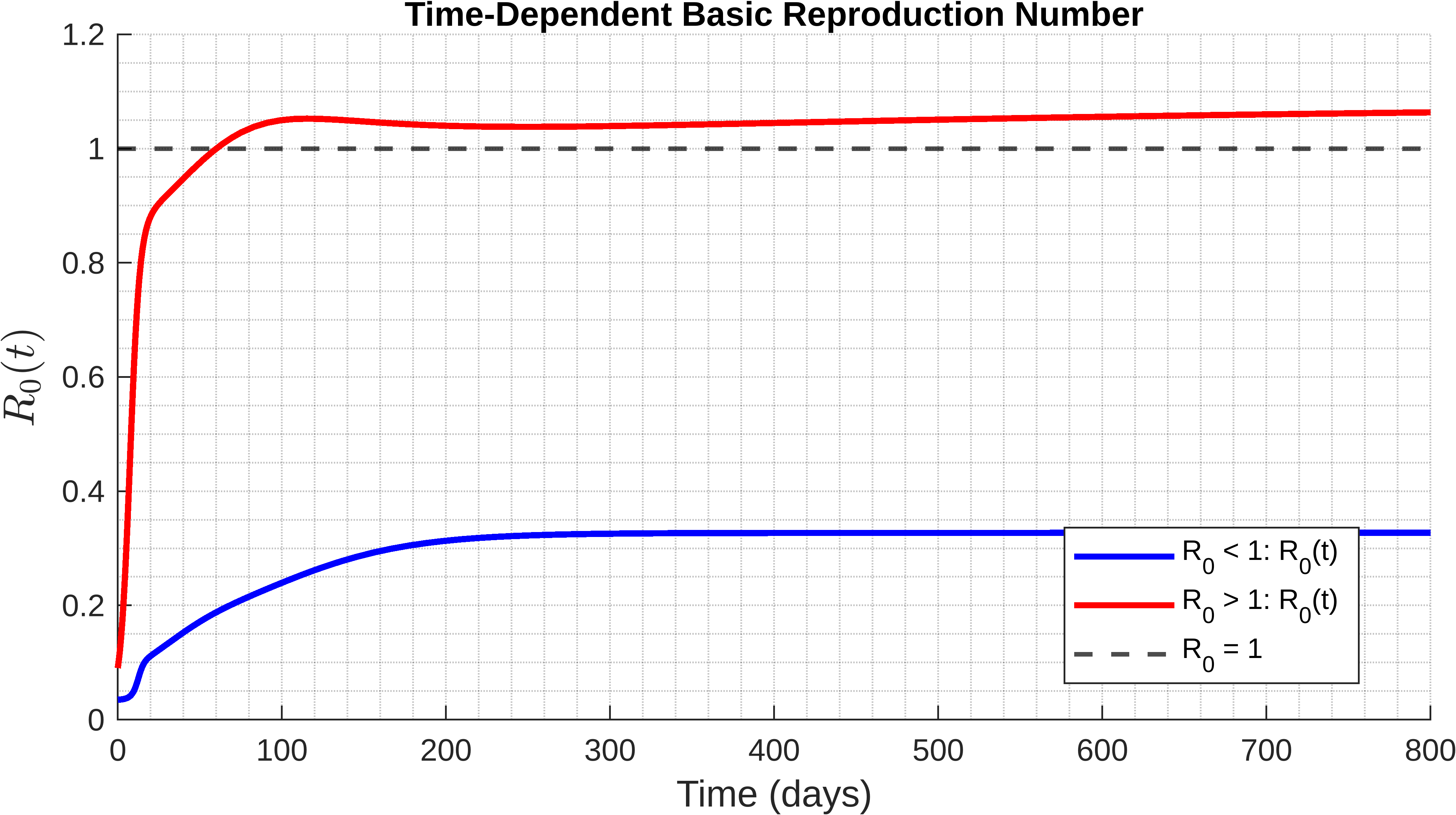}
    \caption{Evolution of reproduction number with time for the parameter sets A and B.}
    \label{fig_R0 under time}
\end{figure}

\subsection{Simulation of Optimal Control Analysis}
We carry out numerical simulations of the proposed HBV model, both with and without the control strategies \cite{alrabaiah2020optimal} to illustrate their impact. To obtain the solutions, we use the forward-backward sweep method, which is a standard numerical approach for solving optimal control problems. In this scheme, the state equations are solved forward in time using the control functions, and the adjoint equations are solved backward in time to update the co-state variables. The control values \( u_1(t) \) and \( u_2(t) \) are updated by minimizing the Hamiltonian at each time step. Optimal control is achieved when the control profile converges according to Pontryagin’s principle. The weight constants and balancing constants are considered as $k_1$ = 1, $k_2$ = 10, $w_1$ = 15, and $w_2$ = 5. In the following figures, the red dashed lines illustrate the dynamics of different population compartments under control measures, whilst the blue lines signify population behavior in the absence of interventions. We graphically show the influence and efficacy of the control measures proposed by different sets of strategies. By incorporating these two control variables, we investigated three strategies for eradicating HBV infection. The impact and efficacy of each strategy for eliminating the disease are depicted graphically.

\section*{Scenario 1: Use of Single Control Parameters}
\subsubsection*{Strategy 1: Implementing Vaccination (\( u_1(t) \)) Control Only}
In this strategy, we use a single control parameter to curb the transmission of HBV and observe its effect on the population. So we use the control parameter \( u_1(t) \) as an optimized parameter and \( u_2(t) \) as an unoptimized parameter and compare the model dynamics when there is no control at all. In this situation, we observe a significant decrease in infections in both acute and chronic populations; see Fig. \ref{fig_u1_optimal}. 

\begin{figure}[!htbp]
    \centering
    \includegraphics[width=\textwidth]{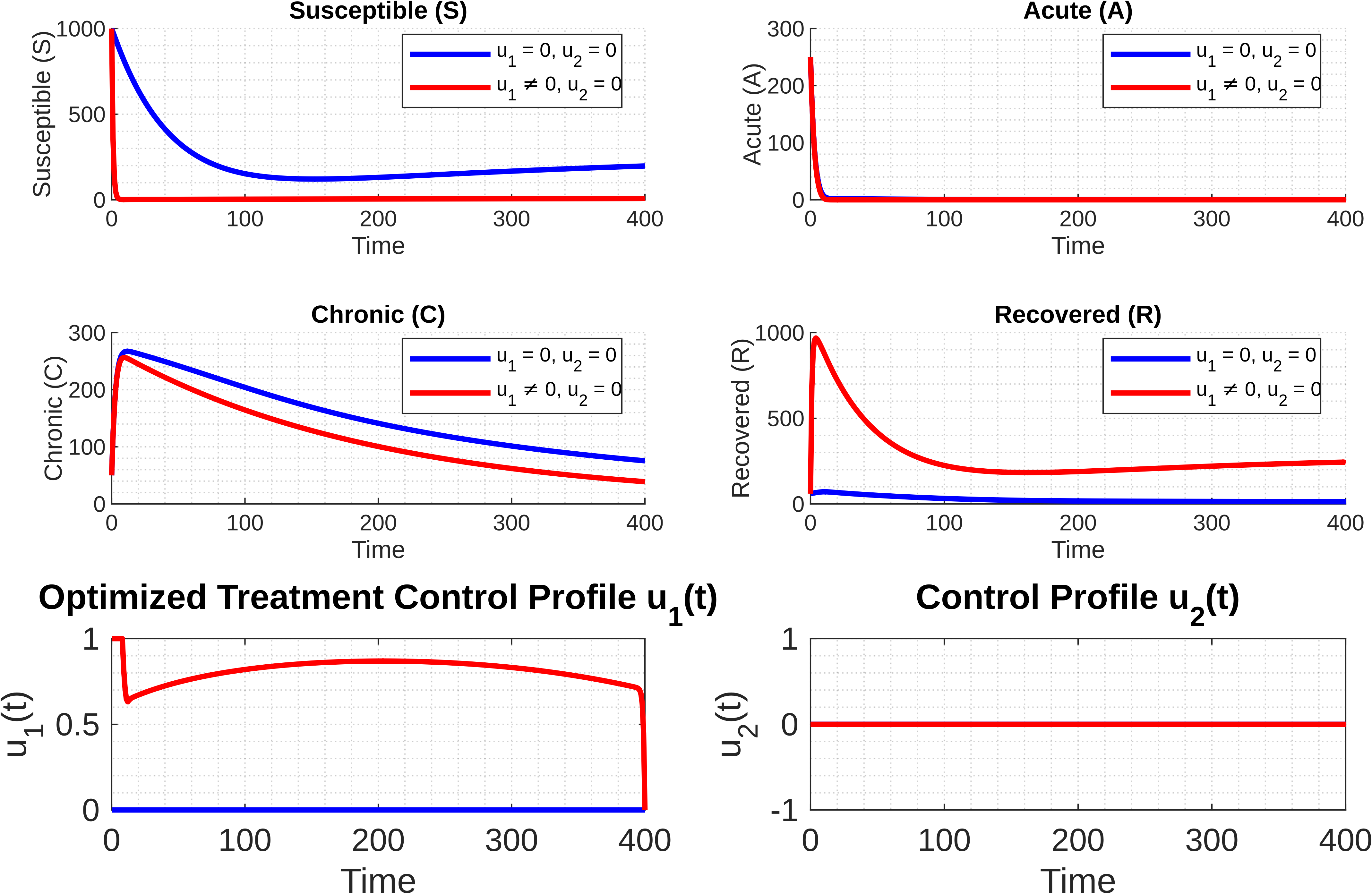}
    \caption{Population and control dynamics for HBV model under optimal control strategy with $u_1 \neq 0$ and \( u_2(t) = 0\). The plots show Susceptible population $S(t)$, Acute population $A(t)$, Chronic population $C(t)$, Recovered population $R(t)$, and the control profiles \( u_1(t) \) and \( u_2(t) \), where \( u_1(t) \) is optimized.}
    \label{fig_u1_optimal}
\end{figure}

\subsection*{Strategy 2: Implementing Treatment (\( u_2(t) \)) Control Only}In this strategy, we again use one control parameter to curb HBV transmission and observe the effect on the population. So we use the control parameter \( u_2(t) \) as an optimized parameter and \( u_1(t) \) as an unoptimized parameter and compare the model dynamics when there is no control at all. In this case, we observe a significant decrease in infections in both acute and chronic populations; see Fig. \ref{fig_u2_optimal}.

\begin{figure}[!htbp]
    \centering
    \includegraphics[width=\textwidth]{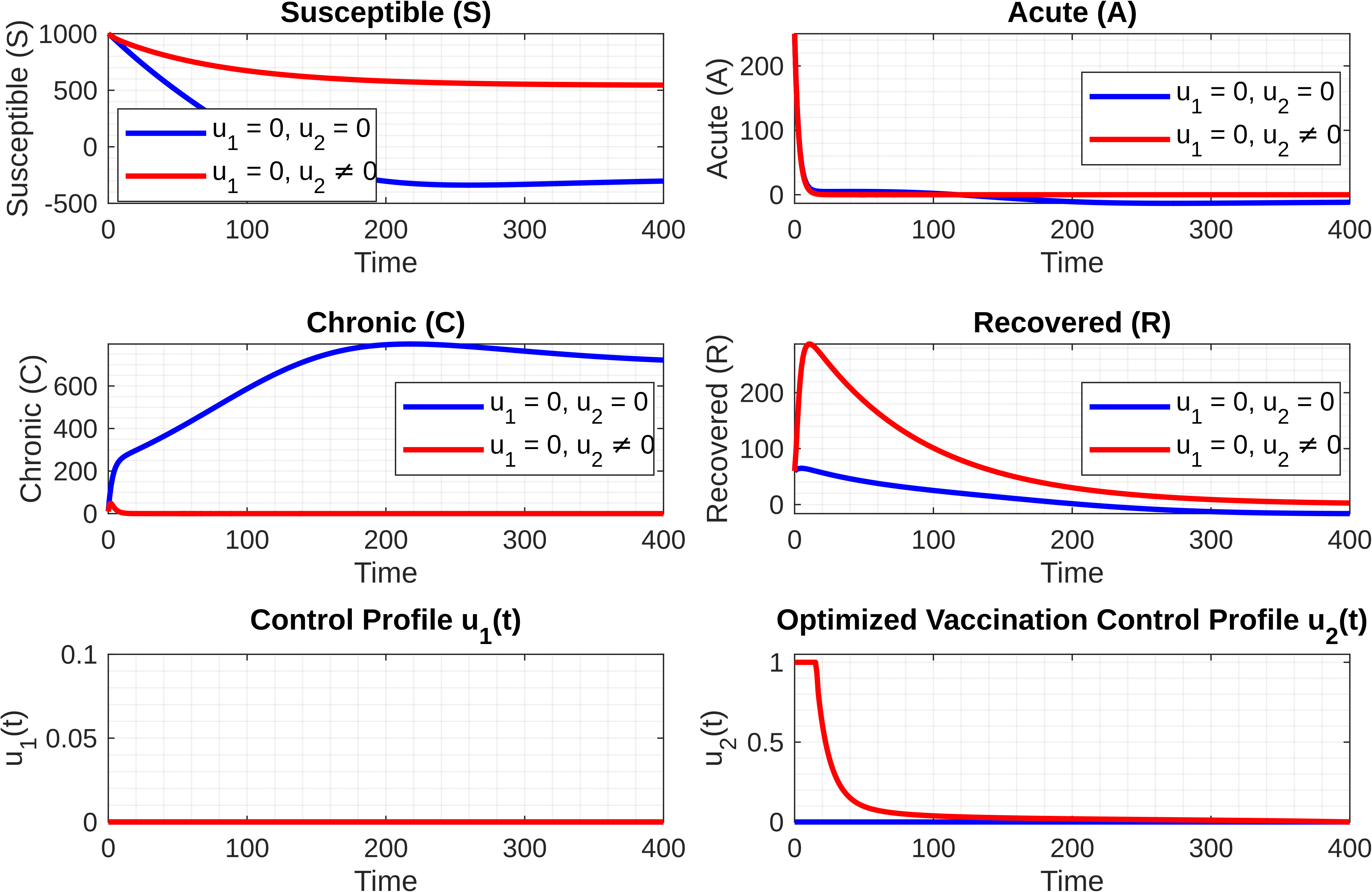}
    \caption{Population and control dynamics for HBV model under optimal control strategy with $u_2 \neq 0$ and \( u_1(t) = 0 \). The plots show (a) Susceptible population $S(t)$, Acute population $A(t)$, Chronic population $C(t)$, Recovered population $R(t)$, and the control profiles \( u_1(t) \) and \( u_2(t) \), where \( u_2(t) \) is optimized.}
    \label{fig_u2_optimal}
\end{figure}

\section*{Scenario 2: Use of Coupled Control Parameters}
\subsection*{Strategy 3: Implementing both Vaccination (\( u_1(t) \)) \& Treatment (\( u_2(t) \)) Controls}In this strategy, we use two control parameters to control the transmission of HBV and see the influence on population dynamics. So we use the control parameters \( u_1(t) \) and \( u_2(t) \) both as optimized parameters and compare the model dynamics when there is no control at all. We get very good results in controlling infection in both acute and chronic populations, see Fig. \ref{fig_u1_u2_optimal}.

\begin{figure}[!htbp]
    \centering
    \includegraphics[width=\textwidth]{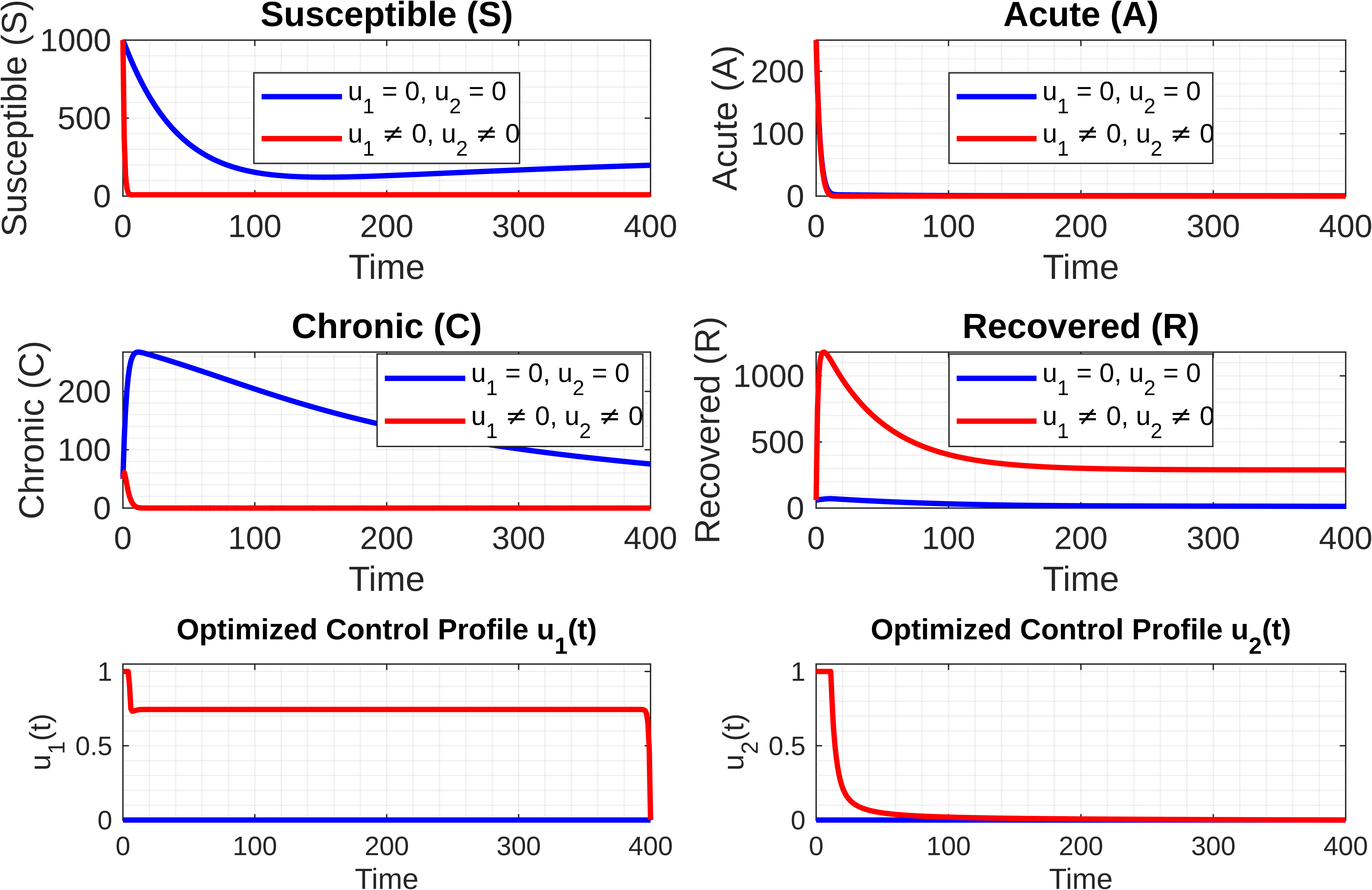}
    \caption{Population and control dynamics for HBV model under optimal control strategy with $u_1 \neq 0$ and $u_2 \neq 0$. The plots show Susceptible population $S(t)$, Acute population $A(t)$, Chronic population $C(t)$, Recovered population $R(t)$, and the optimized control profiles of \( u_2(t) \) and \( u_2(t) \).}
    \label{fig_u1_u2_optimal}
\end{figure}

\section*{Results and Discussion}
Our findings underscore the substantial impact that optimal control measures can have on the spread of Hepatitis B. In the absence of any intervention, both acute and chronic infections remained consistently elevated throughout the simulation period. Introducing a control strategy focused solely on reducing susceptibility through \( u_1(t) \), representing vaccination, led to a noticeable reduction in the number of susceptible individuals, alongside a modest increase in the recovered population. This suggests that preventive strategies alone can begin to alter the trajectory of the epidemic.

In contrast, applying control exclusively to the chronically infected population via \( u_2(t) \), representing treatment, resulted in a more immediate decline in chronic infections. However, this approach had a limited effect on the susceptible and recovered groups, indicating a narrower but more targeted outcome. The most significant reduction in the overall disease burden was achieved by employing both control strategies simultaneously. Under this combined approach, the infected compartments declined sharply while the recovered population rose considerably, highlighting the importance of integrated prevention and treatment efforts.

During the early phase of the simulation, control efforts were applied with the greatest intensity, suggesting that prompt intervention is critical for limiting transmission. As infection levels declined over time, the optimal control inputs decreased gradually. This trend mirrors real-world public health strategies, where early and aggressive responses are essential to suppress outbreaks.

Additionally, as illustrated in Fig. \ref{fig_hbv_strategy_comparison}, implementing either vaccination control \( u_1(t) \) or treatment control \( u_2(t) \) individually led to a notable decline in the infected population. But when we were implementing both the control measures simultaneously, the reduction in the infectious population was surprisingly very high, and the recovered population was greater than any other single intervention. Moreover, we observed reduced peak infection levels and a smaller final outbreak size across all compartments, reinforcing the effectiveness of a combined intervention approach. 

\begin{figure}[!htbp]
    \centering
    \includegraphics[width=7in]{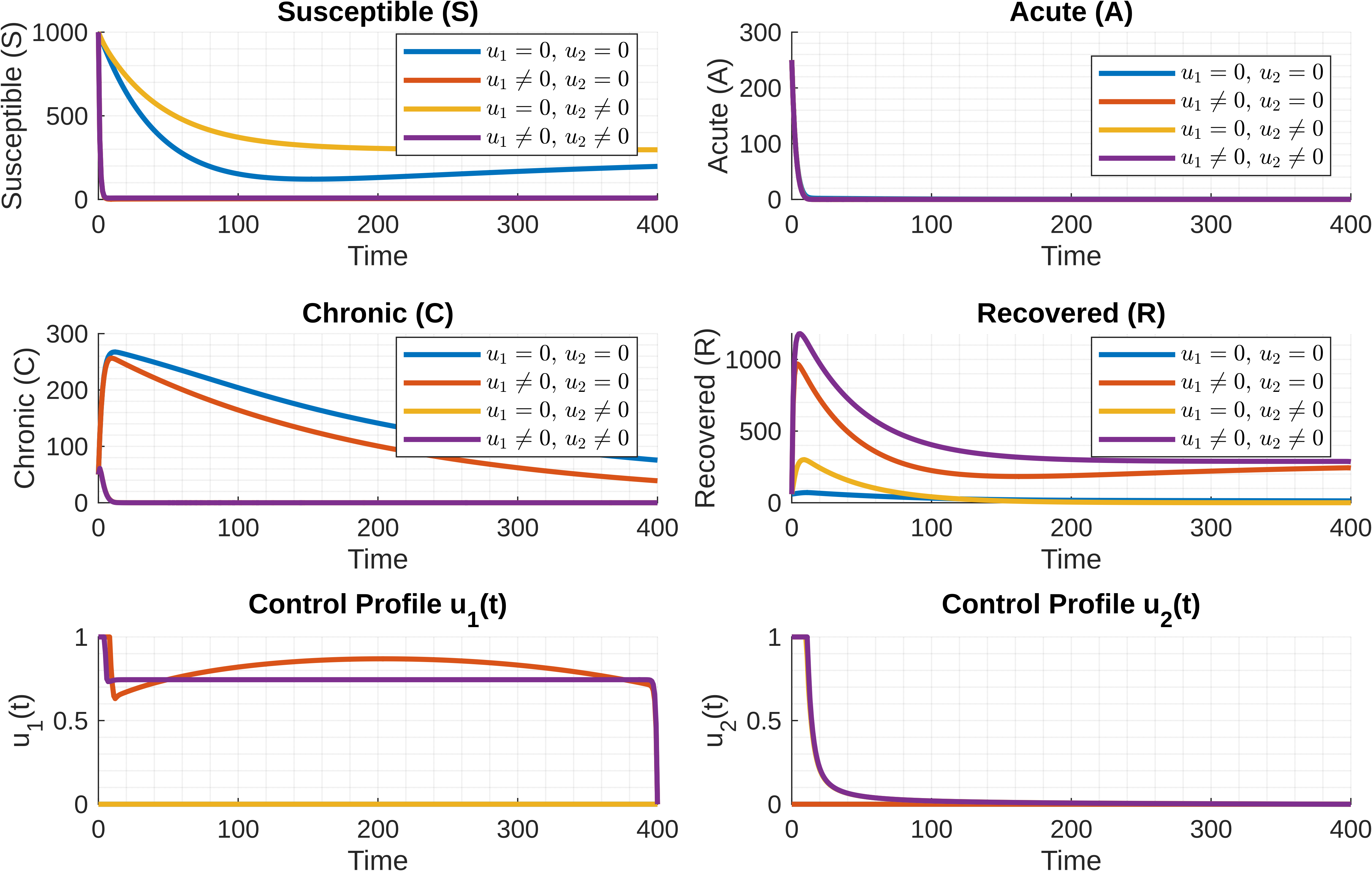}
    \caption{Comparative analysis of HBV model under four optimal control strategies: no control (\( u_1(t) \), \( u_2(t) \)), \( u_1(t) \) optimized only, \( u_2(t) \) optimized only, and both \( u_1(t) \), \( u_2(t) \) optimized. The first four sub-figures show the evolution of the susceptible, acute, chronic, and recovered populations, respectively, while the last two graphs represent the optimized control functions \( u_1(t) \) and \( u_2(t) \) over time.}
    \label{fig_hbv_strategy_comparison}
\end{figure}

\section{Conclusion and Recommendations}
In this work, we constructed and analyzed a mathematical model to study the transmission dynamics of Hepatitis B virus (HBV) infection, incorporating the roles of vaccination and treatment interventions. The model accounts for key epidemiological processes such as vertical transmission, chronic infection, and nonlinear treatment effects. Through rigorous analytical methods, we derived the basic reproduction number, \( \mathcal{R}_0 \), and demonstrated that the disease-free equilibrium is both locally and globally asymptotically stable when \( \mathcal{R}_0 < 1 \). Conversely, the infection persists when \( \mathcal{R}_0 > 1 \), in line with expectations from classical epidemic theory \cite{who2024global}.

A global sensitivity analysis employing Latin Hypercube Sampling (LHS) and Partial Rank Correlation Coefficients (PRCC) identified the transmission rates (\( \alpha \) \& \( \beta \)) and the disease-induced death rate (\( \tau \)) as the most impactful parameters influencing HBV prevalence. These findings suggest that effective strategies focused on reducing contact transmission and improving patient survival through timely treatment could substantially limit disease spread.

To evaluate the impact of intervention strategies, we applied Pontryagin’s Maximum Principle \cite{pontryagin1962} and derived the optimality conditions for control measures involving vaccination and treatment. Numerical simulations confirmed that combining both controls is more effective than applying either strategy in isolation, significantly reducing the infected population and increasing recovery rates.

\subsection*{Recommendations}

Based on the theoretical and computational results, we offer the following recommendations for HBV control and public health decision-making:

\begin{enumerate}
    \item \textbf{Promote Integrated Vaccination and Treatment Campaigns} \\
    The model underscores the effectiveness of jointly administering vaccination and treatment. Policy frameworks should support programs that ensure coordinated delivery of both interventions, maximizing their synergistic effect.

    \item \textbf{Reduce Contact Transmission in High-Risk Populations} \\
    Given the high sensitivity of the model to the contact rates \( \alpha \) and \( \beta \), public health strategies should emphasize behavioral change, awareness campaigns, and targeted interventions in populations at increased risk of exposure \cite{shepard2006}.

    \item \textbf{Expand and Sustain Vaccination Programs} \\
    The significant role of vaccination in lowering \( \mathcal{R}_0 \) justifies continued investment in expanding vaccine coverage, particularly in newborns and unvaccinated adults\cite{al2024global}. Immunization programs should include outreach components for hard-to-reach and vulnerable groups.

    \item \textbf{Improve Access to Early Diagnosis and Treatment} \\
    To effectively reduce disease-induced mortality (\( \tau \)) and enhance recovery, early diagnosis and equitable access to antiviral therapy must be prioritized, especially in settings with limited healthcare infrastructure \cite{terrault2018}.

    \item \textbf{Incorporate Mathematical Modeling into Public Health Policy} \\
    This study illustrates the utility of optimal control and epidemiological modeling in informing the timing and allocation of interventions. Public health agencies are encouraged to integrate model-based tools into HBV elimination strategies\cite{easterbrook20242024}.

    \item \textbf{Invest in Surveillance and Model Refinement} \\
    Accurate and ongoing data collection is essential for calibrating models and assessing interventions. Enhancing health data infrastructure and supporting surveillance systems will facilitate more adaptive and responsive policymaking.
\end{enumerate}

The current model forms a foundational structure that can be extended in future studies to include additional epidemiological complexities such as age stratification, co-infection dynamics, spatial heterogeneity, and behavioral changes. These extensions are essential to better inform policy and support long-term HBV elimination goals at national and global levels\cite{who2024global}.

\section*{Authors contributions} All authors have contributed equally.

\section*{Data availability} This manuscript has no associated data.

\section*{Declarations}
\section*{Conflict of interests} There is no conflict of interest regarding this work.

\bibliography{references}

@article{khan2018spreading,
  title={Spreading dynamic of acute and carrier hepatitis B with nonlinear incidence},
  author={Khan, Tahir and Zaman, Gul and Saleh Alshomrani, Ali},
  journal={PloS one},
  volume={13},
  number={4},
  pages={e0191914},
  year={2018},
  publisher={Public Library of Science San Francisco, CA USA}
}

@article{li1996geometric,
  title={A geometric approach to global-stability problems},
  author={Li, Michael Y and Muldowney, James S},
  journal={SIAM Journal on Mathematical Analysis},
  volume={27},
  number={4},
  pages={1070--1083},
  year={1996},
  publisher={SIAM}
}

@article{zaman2008stability,
  title={Stability analysis and optimal vaccination of an SIR epidemic model},
  author={Zaman, Gul and Kang, Yong Han and Jung, Il Hyo},
  journal={BioSystems},
  volume={93},
  number={3},
  pages={240--249},
  year={2008},
  publisher={Elsevier}
}

@article{zarin2021dynamics,
  title={Dynamics of five grade leishmania epidemic model using fractional operator with Mittag--Leffler kernel},
  author={Zarin, Rahat and Khan, Amir and Inc, Mustafa and Humphries, Usa Wannasingha and Karite, Touria},
  journal={Chaos, Solitons \& Fractals},
  volume={147},
  pages={110985},
  year={2021},
  publisher={Elsevier}
}

@article{khan2016classification,
  title={Classification of different hepatitis B infected individuals with saturated incidence rate},
  author={Khan, Tahir and Zaman, Gul},
  journal={SpringerPlus},
  volume={5},
  number={1},
  pages={1082},
  year={2016},
  publisher={Springer}
}

@article{khan2017transmission,
  title={The transmission dynamic and optimal control of acute and chronic hepatitis B},
  author={Khan, Tahir and Zaman, Gul and Chohan, M Ikhlaq},
  journal={Journal of biological dynamics},
  volume={11},
  number={1},
  pages={172--189},
  year={2017},
  publisher={Taylor \& Francis}
}

@article{van2002reproduction,
  title={Reproduction numbers and sub-threshold endemic equilibria for compartmental models of disease transmission},
  author={Van den Driessche, Pauline and Watmough, James},
  journal={Mathematical biosciences},
  volume={180},
  number={1-2},
  pages={29--48},
  year={2002},
  publisher={Elsevier}
}

@article{khan2020stability,
  title={Stability analysis of leishmania epidemic model with harmonic mean type incidence rate},
  author={Khan, Amir and Zarin, Rahat and Inc, Mustafa and Zaman, Gul and Almohsen, Bandar},
  journal={The European Physical Journal Plus},
  volume={135},
  number={6},
  pages={528},
  year={2020},
  publisher={Springer}
}

@book{castillo2002mathematical,
  title={Mathematical approaches for emerging and reemerging infectious diseases: models, methods, and theory},
  author={Castillo-Chavez, Carlos and Blower, Sally and van den Driessche, Pauline and Kirschner, Denise and Yakubu, Abdul-Aziz},
  volume={126},
  year={2002},
  publisher={Springer Science \& Business Media}
}

@article{martin1974logarithmic,
  title={Logarithmic norms and projections applied to linear differential systems},
  author={Martin Jr, Robert H},
  journal={Journal of Mathematical Analysis and Applications},
  volume={45},
  number={2},
  pages={432--454},
  year={1974},
  publisher={Academic Press}
}

@article{yadav2023study,
  title={Study of an SIQR model with optimal control techniques: A mathematical approach},
  author={Yadav, Sudha and Bhadauria, Archana Singh and Verma, Vijai Shanker},
  journal={Results in Control and Optimization},
  volume={13},
  pages={100327},
  year={2023},
  publisher={Elsevier}
}

@article{umdekar2023seir,
  title={An SEIR model with modified saturated incidence rate and Holling type II treatment function},
  author={Umdekar, Shilpa and Sharma, Praveen Kumar and Sharma, Shivram},
  journal={Computational and Mathematical Biophysics},
  volume={11},
  number={1},
  pages={20220146},
  year={2023},
  publisher={De Gruyter}
}

@article{beay2022dynamical,
  title={Dynamical analysis of a modified epidemic model with saturated incidence rate and incomplete treatment},
  author={Beay, Lazarus Kalvein and Anggriani, Nursanti},
  journal={Axioms},
  volume={11},
  number={6},
  pages={256},
  year={2022},
  publisher={MDPI}
}

@article{ghosh2019qualitative,
  title={Qualitative analysis and optimal control strategy of an SIR model with saturated incidence and treatment},
  author={Ghosh, Jayanta Kumar and Ghosh, Uttam and Biswas, MHA and Sarkar, Susmita},
  journal={Differential Equations and Dynamical Systems},
  pages={1--15},
  year={2019},
  publisher={Springer}
}

@article{hethcote2000mathematics,
  title={The mathematics of infectious diseases},
  author={Hethcote, Herbert W},
  journal={SIAM review},
  volume={42},
  number={4},
  pages={599--653},
  year={2000},
  publisher={SIAM}
}

@article{castillo2002computation,
  title={On the computation of r. And its role on global stability carlos castillo-chavez∗, zhilan feng, and wenzhang huang},
  author={Castillo-Chavez, Carlos},
  journal={Mathematical approaches for emerging and reemerging infectious diseases: an introduction},
  volume={1},
  pages={229},
  year={2002}
}

@book{diekmann2000mathematical,
  title={Mathematical epidemiology of infectious diseases: model building, analysis and interpretation},
  author={Diekmann, Odo and Heesterbeek, Johan Andre Peter},
  volume={5},
  year={2000},
  publisher={John Wiley \& Sons}
}

@article{muldowney1990compound,
  title={Compound matrices and ordinary differential equations},
  author={Muldowney, James S},
  journal={The Rocky Mountain Journal of Mathematics},
  pages={857--872},
  year={1990},
  publisher={JSTOR}
}

@article{li1995global,
  title={Global stability for the SEIR model in epidemiology},
  author={Li, Michael Y and Muldowney, James S},
  journal={Mathematical biosciences},
  volume={125},
  number={2},
  pages={155--164},
  year={1995},
  publisher={Elsevier}
}

@article{GUMEL2003409,
title = {A qualitative study of a vaccination model with non-linear incidence},
journal = {Applied Mathematics and Computation},
volume = {143},
number = {2},
pages = {409-419},
year = {2003},
issn = {0096-3003},
doi = {https://doi.org/10.1016/S0096-3003(02)00372-7},
url = {https://www.sciencedirect.com/science/article/pii/S0096300302003727},
author = {A.B. Gumel and S.M. Moghadas}
}

@misc{who2024global,
  title={Global Hepatitis Report 2024: Action for Access in Low-and Middle-Income Countries},
  author={WHO},
  year={2024},
  publisher={World Health Organization Geneva, Switzerland}
}

@article{liu2022numerical,
  title={Numerical dynamics and fractional modeling of hepatitis B virus model with non-singular and non-local kernels},
  author={Liu, Peijiang and Din, Anwarud and Zarin, Rahat},
  journal={Results in Physics},
  volume={39},
  pages={105757},
  year={2022},
  publisher={Elsevier}
}

@article{zarin2021fractional,
  title={Fractional modeling and optimal control analysis of rabies virus under the convex incidence rate},
  author={Zarin, Rahat and Ahmed, Iftikhar and Kumam, Poom and Zeb, Anwar and Din, Anwarud},
  journal={Results in Physics},
  volume={28},
  pages={104665},
  year={2021},
  publisher={Elsevier}
}

@article{thornley2008hepatitis,
  title={Hepatitis B in a high prevalence New Zealand population: a mathematical model applied to infection control policy},
  author={Thornley, Simon and Bullen, Chris and Roberts, Mick},
  journal={Journal of Theoretical Biology},
  volume={254},
  number={3},
  pages={599--603},
  year={2008},
  publisher={Elsevier}
}

@article{zou2010modeling,
  title={Modeling the transmission dynamics and control of hepatitis B virus in China},
  author={Zou, Lan and Zhang, Weinian and Ruan, Shigui},
  journal={Journal of theoretical biology},
  volume={262},
  number={2},
  pages={330--338},
  year={2010},
  publisher={Elsevier}
}

@article{li2002qualitative,
  title={Qualitative analyses of SIS epidemic model with vaccination and varying total population size},
  author={Li, Jianquan and Ma, Zhien},
  journal={Mathematical and Computer Modelling},
  volume={35},
  number={11-12},
  pages={1235--1243},
  year={2002},
  publisher={Elsevier}
}

@article{khan2021modeling,
  title={Modeling and sensitivity analysis of HBV epidemic model with convex incidence rate},
  author={Khan, Amir and Zarin, Rahat and Hussain, Ghulam and Usman, Auwalu Hamisu and Humphries, Usa Wannasingha and Gomez-Aguilar, JF},
  journal={Results in Physics},
  volume={22},
  pages={103836},
  year={2021},
  publisher={Elsevier}
}

@article{diekmann1990definition,
  title={On the definition and the computation of the basic reproduction ratio R 0 in models for infectious diseases in heterogeneous populations},
  author={Diekmann, Odo and Heesterbeek, Johan Andre Peter and Metz, Johan Anton Jacob},
  journal={Journal of mathematical biology},
  volume={28},
  pages={365--382},
  year={1990},
  publisher={Springer}
}

@article{alrabaiah2020optimal,
  title={Optimal control analysis of hepatitis B virus with treatment and vaccination},
  author={Alrabaiah, Hussam and Safi, Mohammad A and DarAssi, Mahmoud H and Al-Hdaibat, Bashir and Ullah, Saif and Khan, Muhammad Altaf and Shah, Syed Azhar Ali},
  journal={Results in Physics},
  volume={19},
  pages={103599},
  year={2020},
  publisher={Elsevier}
}

@article{zhang2008backward,
  title={Backward bifurcation of an epidemic model with saturated treatment function},
  author={Zhang, Xu and Liu, Xianning},
  journal={Journal of mathematical analysis and applications},
  volume={348},
  number={1},
  pages={433--443},
  year={2008},
  publisher={Elsevier}
}

@article{wang2006backward,
  title={Backward bifurcation of an epidemic model with treatment},
  author={Wang, Wendi},
  journal={Mathematical biosciences},
  volume={201},
  number={1-2},
  pages={58--71},
  year={2006},
  publisher={Elsevier}
}

@article{wang2004bifurcations,
  title={Bifurcations in an epidemic model with constant removal rate of the infectives},
  author={Wang, Wendi and Ruan, Shigui},
  journal={Journal of Mathematical Analysis and Applications},
  volume={291},
  number={2},
  pages={775--793},
  year={2004},
  publisher={Elsevier}
}

@book{pontryagin1962,
  title={The Mathematical Theory of Optimal Processes},
  author={Pontryagin, L.S. and others},
  year={1962},
  publisher={Pergamon Press}
}

@article{shepard2006,
  author={Shepard, C.W. and Simard, E.P. and Finelli, L. and others},
  title={Hepatitis B virus infection: epidemiology and vaccination},
  journal={Epidemiologic Reviews},
  year={2006},
  volume={28},
  pages={112--125}
}

@article{terrault2018,
  author={Terrault, N.A. and Lok, A.S. and McMahon, B.J. and others},
  title={Update on prevention, diagnosis, and treatment of chronic hepatitis B: AASLD 2018 Hepatitis B Guidance},
  journal={Hepatology},
  year={2018},
  volume={67},
  number={4},
  pages={1560--1599}
}

@article{easterbrook20242024,
  title={WHO 2024 hepatitis B guidelines: an opportunity to transform care},
  author={Easterbrook, Philippa J and Luhmann, Niklas and Bajis, Sahar and Min, Myat Sandi and Newman, Morkor and Lesi, Olufunmilayo and Doherty, Meg C},
  journal={The lancet Gastroenterology \& hepatology},
  volume={9},
  number={6},
  pages={493--495},
  year={2024},
  publisher={Elsevier}
}

@article{al2024global,
  title={Global perspectives on the hepatitis B vaccination: challenges, achievements, and the road to elimination by 2030},
  author={Al-Busafi, Said A and Alwassief, Ahmed},
  journal={Vaccines},
  volume={12},
  number={3},
  pages={288},
  year={2024},
  publisher={MDPI}
}
\bibliographystyle{acm}

\end{document}